\newtheorem{assu}[definition]{Assumption}
\newtheorem{algorithm}[definition]{Algorithm}
\newenvironment{assumption}%
{\begin{assu}\rm}%
{\end{assu} }
\newcommand{\bfm}{{\boldsymbol{m}}}
\newcommand{\bfr}{{\boldsymbol{r}}}
\newcommand{\bfv}{{\boldsymbol{v}}}
\newcommand{\bfw}{{\boldsymbol{w}}}
\newcommand{\bfz}{{\boldsymbol{z}}}
\newcommand{\bfF}{{\boldsymbol{F}}}
\newcommand{\bfG}{{\boldsymbol{G}}}
\newcommand{\bfU}{{\boldsymbol{U}}}
\newcommand{\bfV}{{\boldsymbol{V}}}
\newcommand{\bfW}{{\boldsymbol{W}}}
\def\K{{\mathbb K}}
\newcommand{\ovK}{{\overline{\K}}}
\def\I{{ I}}
\def\J{{\mathcal J}}
\def\U{{\mathcal U}}
\def\C{{\mathbb C}}
\def\Q{{\mathbb Q}}
\def\Z{{\mathbb Z}}
\newcommand{\vv}{{\bf v}}
\newcommand{\ww}{{\bf w}}
\begin{document}

\title{Global Newton Iteration over Archimedean and non-Archimedean Fields\\ {\small  (Full Version)}}
\titlerunning{A Note on Global Newton Iteration}

\author{Jonathan D. Hauenstein\inst{1}\thanks{Research was partly supported by NSF grant DMS-1262428 and DARPA Young Faculty Award.} \and Victor Pan\inst{2}\thanks{Research was partly supported by NSF grant CCF-1116736.} \and Agnes Szanto \inst{1}\thanks{Research was partly supported by NSF grant CCF-1217557.}}

\institute{ 
 North Carolina State University,\\
    \email{\{hauenstein,aszanto\}@ncsu.edu}
   \and 
Lehman College - City University of New York,\\
 \email{victor.pan@lehman.cuny.edu}
 }

\maketitle

\begin{abstract} In this paper, we study iterative methods on the coefficients of the   {\em rational univariate representation (RUR)} of a given algebraic set, called {\em global Newton iteration}. We compare two natural approaches to define locally quadratically convergent iterations: the first  one involves Newton iteration applied to the approximate roots individually and then interpolation to find the RUR of these approximate roots; the second one considers the coefficients  in the exact RUR as  zeroes of a high dimensional map defined by polynomial reduction, and applies Newton iteration  on this map. We prove that over fields with a p-adic valuation  these two approaches give the same iteration function, but over fields equipped with the usual Archimedean absolute value, they are not equivalent. In the latter case, we give explicitly the iteration function for both approaches.  Finally, we analyze the parallel complexity of the different versions of the global Newton iteration, compare them, and  demonstrate that they can be efficiently computed. The motivation for this study comes from the certification of approximate roots of overdetermined and singular polynomial systems via the recovery of an exact RUR from approximate numerical data.   \end{abstract}



\section{Introduction}

Let $F_1, \ldots, F_n \in \K[x_1, \ldots, x_n]$ be  polynomials  with coefficients from a field $\K$, $\J:=\langle F_1,\ldots, F_n\rangle$ the ideal they generate, and assume that $\J$  is zero dimensional and radical. 
 We consider two cases for the coefficient field $\K$: 
\begin{description}
\item[Non-Archimedean case:] Let $R$ be a principal ideal domain,  $\K$  its field of fractions, and $p$ an irreducible element in $R$. Then, we can equip $\K$  with the $p$-adic valuation, which defines a non-Archimedean metric on vector spaces over  $\K$. The main examples include 
$R=\Z$ and $p$ a prime number, or $R=\Q[t]$  and $p=t$. Note that after clearing denominators, we can assume that $F_1, \ldots, F_m\in  R[x_1, \ldots, x_n]$. 

\item[Archimedean case:] In this case, $\K$ is a subfield of $\C$ and it is equipped with the usual absolute value. The usual Euclidean norm  defines an Archimedean metric on  vector spaces over $\K$. 
\end{description}

The objective of this paper is to  study  iterative methods on the coefficients of  the {\em rational univariate representation (RUR)} of a component of $\J$, and compare them in the Archimedean and the non-Archimedean cases. The RUR  of a component of $\J$, originally defined in \cite{Rou99}, is a simple representation of a subset of the common roots of $F_1,\ldots, F_n$, expressing the coordinates of these common roots as Lagrange interpolants at  nodes which are given as the roots of a univariate polynomial   (see definition below).  

We study two natural approaches for iterations  that are locally quadratically convergent to an exact RUR of a component of $\J$, both based on Newton's method:
\begin{itemize}
\item To update an RUR,  apply the usual $n\times n$ Newton iteration to each common root of the old RUR, and compute the updated RUR which defines  these updated roots. In this approach we assume inductively that the common roots of the iterated RUR's  are known exactly. 

\item Consider the map that takes an RUR and returns the reduced form of the input polynomials  $F_1,\ldots, F_n$ modulo the RUR. Since an exact RUR of a component of $\J$ is a zero of this map, we apply Newton's method to this map. 
\end{itemize}
Note that in the $p$-adic case, the first iteration was studied in \cite{Grobner-free} where they gave the  iteration function explicitly and analyzed its complexity in terms of straight-line programs, while the second approach was proposed in \cite{Trinks85}, without giving the iteration explicitly. 

The main results of this paper are as follows: First we prove that  the above two approaches give the same iteration function in the p-adic valuation. Next, we show that in the Archimedean case the two iterations are not equivalent. In this case, we give the explicit iteration functions for both approaches and show that they are  also different from the iteration function presented 
in~\cite{Grobner-free} and interpreted for the Archimedean case. We illustrate the methods on an example involving the mobility of a spacial mechanism. Finally, we analyze the parallel complexity of both approaches in the Archimedean case: For the first approach, we use $n\times n$ Newton iterations independently for each root and an efficient parallel Vandermonde linear solver for  Lagrange interpolation. For the second approach we utilize efficient parallel  Toeplitz-like linear system solvers to compute modular inverses of univariate polynomials. We present a version of  the algorithms of \cite{Pan1992} to solve Toeplitz-like linear systems that uses a more efficient displacement representation with factor circulant matrices defined in \cite[Example 4.4.2]{Pan2001} rather than triangular Toeplitz matrices. Finally, we briefly discuss 
the computation of modular inverses  and cofactors
when all the roots of at least one of the two
associated input polynomials are simple and known.

\subsection{Related work}

The motivation to study numerical approximations of  RUR's   come from a  work in progress in  \cite{AkogHausSza13} to certify approximate roots of overdetermined and singular polynomial systems over $\Q$. For well-constrained non-singular systems, Smale's $\alpha$-theory (see \cite[Chapters 8 and 14]{BlCuShSm})  gives a tool for the certification of approximate roots,  as was explored and implemented in  {\bf alphaCertified} \cite{Hauenstein-Sotille}. However, {\bf alphaCertified} does not straightforwardly  extend to overdetermined or singular systems: in  \cite{Hauenstein-Sotille}, they propose to use universal lower bounds for the minimum of positive polynomials on a disk, such as in  \cite{Jeronimo-Perrucci}, but they conclude that  such bounds are ``too small to be practical.'' To overcome this difficulty, in  \cite{AkogHausSza13} it is proposed to iteratively compute the exact RUR of a rational component from approximations of the roots, and then use   the machinery of \cite{Hauenstein-Sotille} to certify approximate roots of this RUR. 
While \cite{AkogHausSza13} is devoted to considerations about the global behavior of the iteration,  this paper considers different choices of the iteration function and their parallel complexity. 

The iterative algorithms that are in the core of this paper  are the Archimedean adaptations of what is known as ``global Newton iteration" or ``multivariate Hensel lifting" or ``Newton-Hensel lifting" in the computer algebra literature, where it is defined for the non-Archimedean case.   Various versions of the Newton-Hensel lifting were applied in many applications within computer algebra, including in  univariate polynomial factorization 
\mbox{\cite{Zassenhaus69,LLL82}}, multivariate polynomial factorization \cite{Chistov84,Grigorev84,Kaltofen85}, gcd of sparse multivariate polynomials \cite{Kaltofen85b},  lexicographic    and general Gr\"obner basis  computation of zero dimensional algebraic sets  \cite{Trinks85,Winkler88}, geometric resolution  of equi-dimensional algebraic sets \cite{GHHMPM97,GHMM98,HKPSW2000,Grobner-free}, Chow forms \cite{JKSS04}, and sparse interpolation \cite{AveKrickPac2006}. As we mentioned above, the most related to this paper are  \cite{Trinks85,Grobner-free}.

Computing numerical approximation to symbolic objects  in the Archimedean metric is not new either. There is a significant literature studying such hybrid symbolic-numeric algorithms, and without trying to give a complete bibliography,  the following mentions the papers that are the closest to our work.

Closest to our approach is the literature on  finding the vanishing ideal of a finite point set given with limited precision. In \cite{CaPaHaMo2001}, they give an  algorithm that given  {\em one} approximate zero of a polynomial system, finds  the  RUR of the irreducible component  containing the corresponding exact roots in randomized polynomial time. The algorithm in \cite{CaPaHaMo2001} applies the univariate results of  \cite{KanLenLov1988} using  lattice basis reduction.   The main point of our approach in this paper and in \cite{AkogHausSza13} is that we assume to know {\em all} approximate roots of a rational  component, so in this case we can compute the exact RUR much more efficiently, and in parallel.

The papers \cite{CaPaHaMo2001,Stetter2004,MoTr2008,AFT2008,HKPP2009,Fassino2010,FasTor2013} use a more general approach than the one here, by computing  border bases for a given set of approximate roots, which avoids defining a random primitive element as is done for  RURs used in this paper. For general polynomial systems, numerical computation of Gr\"obner bases was proposed, for example, in \cite{Shir93,Shir96,Lich2010,TraZa2002}.  The focus of these papers is to find numerically stable support for the bases, which we assume to be given here by the primitive element.

\section{Preliminaries}

Let us start with recalling the notion of Roullier's {\em Rational Univariate Representation (RUR)}, originally defined in \cite{Rou99}. Instead of defining the RUR of an ideal $\J$, here we only define the notion of the {\em RUR of a component of $\J$}, which is a weaker notion.  We follow the  notation in \cite{AkogHausSza13}.  

Let $\K$ be a field.  Given ${\bf F}=(F_1, \ldots, F_n)\subset \K[x_1, \ldots, x_n]$ for some $n$, and assume that  the ideal  $\J:=\langle F_1, \ldots, F_n\rangle$ is radical and zero dimensional. Then the factor ring $ \K[x_1, \ldots, x_n]/ \J$ is a finite dimensional vector space over $\K$, and we denote 
$$
\delta:=\dim_\K \K[x_1, \ldots, x_n]/\J
.$$
 Furthermore, for almost all $(\lambda_1, \ldots, \lambda_n)\in \K^n$ (except a Zariski closed subset), the linear combination  
$$
u(x_1, \ldots, x_n):=\lambda_1 x_1 + \cdots + \lambda_nx_n
$$
is a {\em primitive element} of $\J$, i.e. the powers $1, u, u^2, \ldots, u^{\delta-1}$ form a linear basis for $\K[x_1, \ldots, x_n]/ \J$ (c.f. \cite{Rou99}). 

In the algorithms that follow, we compute an RUR that may not generate the  ideal $\J$, nevertheless 
the polynomials $F_1, \ldots, F_n$ vanish modulo the RUR. In this case the RUR will generate a {\em component of $\J$}. 
We have the following definition:

\begin{definition}  Let $\J=\langle F_1, \ldots, F_n\rangle\subset \K[x_1, \ldots, x_n]$ be as above. Let $\lambda_1 x_1 + \cdots + \lambda_nx_n$ be a primitive element of $\J$. 
We call the polynomials  
\begin{equation}\label{RUR}
T-\lambda_1 x_1 + \cdots + \lambda_nx_n, \;q(T),\; v_1(T), \ldots, v_n(T)
\end{equation}
 a {\em Rational Univariate Representation (RUR) of a component of $\J$} if it satisfies the following properties:
\begin{itemize}
\item $q(T)\in \K[T]$ is a monic polynomial of degree $d\leq \delta$,
\item ${\rm gcd}_T(q(T), q'(T))=1$ where $q'(T)=\frac{\partial q(T)}{\partial T}$,
\item $v_1(T), \ldots, v_n(T)\in \K[T]$ are all degree at most $d-1$ and satisfy
$$
\lambda_1 v_1(T) + \cdots + \lambda_nv_n(T)=T ,
$$
\item for all $i=1, \ldots, n$ we have 
$$ F_i(v_1(T), \ldots, v_n(T)) \equiv 0 \mod q(T).
$$
\end{itemize}
\end{definition}

First note that  the set
\begin{eqnarray}\label{GB}
\{ q(T), \; x_1-v_1(T), \ldots, x_n-v_n(T)\}
\end{eqnarray} 
forms  a Gr\"obner basis for the ideal it generates with respect to the lexicographic monomial ordering with $T<x_1<\cdots < x_n$.

 Next 
let us recall the relationship between the RUR of  a component of $\J$ and its (exact) roots.   Let $$V(\J)= \{\xi_1, \ldots, \xi_\delta\} \subset \C^n$$ be the set of  common roots of $\J$.  Denote $\xi_i=(\xi_{i,1}, \ldots, \xi_{i,n})$ for $i=1, \ldots, \delta$. Then for any $n$-tuple $(\lambda_1, \ldots \lambda_n)\in \K^n$ such that for $i,j=1, \ldots, \delta$
$$
\lambda_1 \xi_{i,1} + \cdots + \lambda_n\xi_{i,n}\neq \lambda_1 \xi_{j,1} + \cdots + \lambda_n\xi_{j,n} \quad \text{ if } i\neq j,
\; $$
we can define the primitive element $u=\lambda_1 x_1 + \cdots + \lambda_nx_n$ for $\J$. Since all roots of $\J$ are distinct, such primitive element exist, and can be computed from the roots $\{\xi_1, \ldots, \xi_\delta\}$, or using randomization. Fix such $(\lambda_1, \ldots \lambda_n)\in \K^n$.
 For $d\leq \delta$ let $\{\xi_1, \ldots, \xi_d\}$ be  a subset of  $V(\J)$ 
 and define 
\begin{eqnarray}\label{mui}
\mu_i:= \lambda_1 \xi_{i,1} + \cdots + \lambda_n\xi_{i,n}, \quad i=1, \ldots d.
\end{eqnarray}
The RUR of the component of $\J$ corresponding to the subset $\{\xi_1, \ldots, \xi_d\}\subset V(\J)$ is defined by 
\begin{eqnarray}\label{qT}
q(T):= \prod_{i=1}^d (T-\mu_i), 
\end{eqnarray} and  for each $j=1, \ldots, n,$ the polynomial $v_j(T)$   is the unique Lagrange interpolant of degree at most $d-1$ satisfying
\begin{eqnarray}\label{vj}
v_j(\mu_i) = \xi_{i,j} \quad \text{ for } i=1, \ldots, d.
\end{eqnarray}
Note that if $\J$ is defined by polynomials over $\K$, then the polynomials in the RUR of $\J$ have coefficients in $\K$, but that is not true for all components  of $V(\J)$. We call a subset  $\{\xi_1, \ldots, \xi_d\}\subset V(\J)$ a {\em rational component of $\J$} if the corresponding RUR has also coefficients in $\K$.

\section{Global Newton Iteration} 

In this section we describe  iterative methods that improves the accuracy of the coefficients of the RUR of a component of $\J$.  We use a similar approach as in \cite[Section 4]{Grobner-free}, but instead of a coefficient ring with the p-adic absolute value, here we make adaptations to  coefficient field $\K\subseteq \C$ equipped with the usual absolute value. We start with recalling the definitions given in  \cite[Section 4]{Grobner-free}.

\subsection{Non-Archimedean Global Newton iteration}
 First, we briefly describe the global Newton iteration defined  in \cite[Section 4]{Grobner-free}. There the coefficient domain is the ring $\Q[t]$ and the non-Archimedean metric is defined by the irreducible element  $ t\in \Q[t]$.
 They consider a square system  $\bfF=(F_1, \ldots, F_n)$ with $F_i\in \Q[t][x_1, \ldots, x_n]$. Let 
 $$u(x_1, \ldots, x_n)=\lambda_1x_1+\cdots + \lambda_nx_n=T$$ be a random primitive element for $\langle F_1, \ldots, F_n\rangle $.  Furthermore,  define
$$
\I:=\langle t^k\rangle \text{ for some } k.
$$

 In  \cite[Section 4]{Grobner-free} they assume that some initial approximate RUR is given 
for a component of $\J$ :
$$
 \;q(T), \;{\bf v}(T):=(v_1(T), \ldots, v_n(T)) \in \Q[t][T]
$$
satisfying
the  following assumptions:
 
\begin{assumption}\label{modassu} Let $\bfF$, $u$, $\I$, $q(T)$ and $\vv(T)$ be  as above. Then
\begin{enumerate}
\item $q(T)$ is monic and has degree $d$, 
\item $v_i(T)$ has degree at most $d-1$,
\item $\bfF(\vv(T))\equiv 0 \mod q(T) \mod \I,$
\item $\lambda_1v_1(T)+\cdots +\lambda_n v_n(T) = T\mod \I$,
\item   $J_\bfF(\vv(T))  := \left[\frac{\partial F_i}{\partial x_j}(\vv(T))\right]_{i,j=1}^n$  is invertible modulo $q(T)$ and $\I$.
\end{enumerate}
\end{assumption}

They define the following updates: 
\begin{definition}\label{constr1} 
Assume that  $\bfF$, $u$, $q(T)$, $\vv(T)$ and $\I$ satisfy Assumption \ref{modassu}.  Then in \cite[Section 4]{Grobner-free} they  define 
{\small \begin{eqnarray}
u&=&\sum_{i=1}^n\lambda_i x_i=T \text{ remains the same as for the initial RUR, }\label{uu}\nonumber\\
\bfw(T) &:=&
\vv(T)-   \left(J_\bfF (\vv(T))^{-1}\bfF(\vv(T))\hspace{-.3cm} \mod q(T)\right)\hspace{-.3cm} \mod \I^2,  \label{ww}\nonumber\\
\label{Delta}
\Delta(T)&:=&\sum_{i=1}^n \lambda_i w_i(T) -T \mod \I^2,\nonumber\\
\label{VV}
\bfV(T)&:=&
\bfw(T)-\left(\Delta(T)\cdot\frac{\partial \bfw(T)}{\partial T} \mod q(T)\right)\hspace{-.3cm} \mod \I^2,\nonumber\\
Q(T)&:=&q(T)-\left(\Delta(T)\cdot\frac{\partial q(T)}{\partial T} \mod q(T)\right) \mod \I^2. \label{Q}\nonumber
\end{eqnarray}}
\end{definition}
In \cite[Section 4]{Grobner-free} they prove the following:

\begin{proposition}[\cite{Grobner-free}] \label{prop1} Assume that  $\bfF$, $u$, $q(T)$, $\vv(T)$ and $\I$ satisfy Assumption \ref{modassu} and let  $\bfw(T)$, $\Delta(T)$, $\bfV(T)$, $Q(T)$ be as in Definition \ref{constr1}. Then 
\begin{enumerate}
\item[(i)] $\vv(T)\equiv \bfw(T)\equiv \bfV(T)$, $q(T)\equiv Q(T)$  and $\Delta(T)\equiv 0 \mod \I$
\item[(ii)] $\bfF(\bfw(T))\equiv 0  \mod q(T) \mod \I^2,$
\item[(iii)] { $\langle q(T), U-T-\Delta(T), x_1-w_1(T), \ldots, x_n-w_n(T)\rangle = \langle Q(U), T-U-\Delta(U), x_1-V_1(U), \ldots, x_n-V_n(U)\rangle \mod \I^2,
$}
\item[(iv)] $\bfF(\bfV(T))\equiv 0  \mod Q(T) \mod \I^2,$
\item[(v)] $\lambda_1V_1(T)+\cdots +\lambda_n V_n(T) = T\mod \I^2$.
\end{enumerate}
\end{proposition}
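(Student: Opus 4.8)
The plan is to verify the five claims essentially in the order stated, since each builds on the previous ones, and the whole argument is a bookkeeping exercise in Newton-step estimates modulo $\I$ and $\I^2$ — the content is that the ``correction terms'' introduced in Definition \ref{constr1} are all $\equiv 0 \mod \I$, so that one Newton step doubles the valuation order. First I would prove (i): by Assumption \ref{modassu}(3) we have $\bfF(\vv(T)) \equiv 0 \mod q(T) \mod \I$, so the correction $J_\bfF(\vv(T))^{-1}\bfF(\vv(T)) \bmod q(T)$ lies in $\I$ (using that $J_\bfF(\vv(T))$ is invertible mod $q(T)$ and $\I$, Assumption \ref{modassu}(5), so that multiplying by its inverse does not decrease valuation); hence $\bfw(T) \equiv \vv(T) \mod \I$. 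Then $\Delta(T) = \sum \lambda_i w_i(T) - T \equiv \sum \lambda_i v_i(T) - T \equiv 0 \mod \I$ by Assumption \ref{modassu}(4). Since $\Delta(T) \equiv 0 \mod \I$, the correction terms defining $\bfV(T)$ and $Q(T)$ are in $\I$, giving $\bfV(T) \equiv \bfw(T) \equiv \vv(T)$ and $Q(T) \equiv q(T) \mod \I$.

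Next I would prove (ii), which is the heart of the matter. Writing $\bfw(T) = \vv(T) - \bfc(T)$ with $\bfc(T) := J_\bfF(\vv(T))^{-1}\bfF(\vv(T)) \bmod q(T) \bmod \I^2$, expand $\bfF(\bfw(T))$ by Taylor's formula around $\vv(T)$: $\bfF(\bfw(T)) \equiv \bfF(\vv(T)) - J_\bfF(\vv(T))\,\bfc(T) + (\text{terms of order} \ge 2 \text{ in } \bfc(T)) \mod q(T)$. Since $\bfc(T) \equiv 0 \mod \I$ by (i), the quadratic-and-higher terms are $\equiv 0 \mod \I^2$. And by construction $J_\bfF(\vv(T))\,\bfc(T) \equiv \bfF(\vv(T)) \mod q(T) \mod \I^2$, so $\bfF(\bfw(T)) \equiv 0 \mod q(T) \mod \I^2$. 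The point I would be careful about here is that the reduction ``$\bmod q(T)$'' commutes appropriately with the Taylor expansion — i.e. reducing $\bfc(T)$ mod $q(T)$ before substituting only changes $\bfF(\bfw(T))$ by a multiple of $q(T)$ — and that the inverse of $J_\bfF(\vv(T))$ is taken in $(\K[T]/q(T))[t]/\I^2$ consistently.

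For (iii), (iv) and (v) I would argue as follows. Statement (v) is immediate: by definition $\bfV(T) = \bfw(T) - \Delta(T)\,\partial_T \bfw(T) \bmod q(T) \bmod \I^2$, so $\sum \lambda_i V_i(T) = \sum \lambda_i w_i(T) - \Delta(T)\,\partial_T(\sum \lambda_i w_i(T)) \bmod q(T)$; now $\sum \lambda_i w_i(T) = T + \Delta(T)$, so $\partial_T(\sum \lambda_i w_i(T)) = 1 + \Delta'(T)$, and the expression becomes $T + \Delta(T) - \Delta(T)(1 + \Delta'(T)) = T - \Delta(T)\Delta'(T) \equiv T \mod \I^2$ since $\Delta(T) \equiv 0 \mod \I$. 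For (iii), the two ideals are related by the change of variable $U = T + \Delta(T)$ (equivalently $T = U - \Delta(U) \bmod \I^2$, using $\Delta \equiv 0$); I would substitute this change of variable into $q(T)$, $x_j - w_j(T)$ and expand to first order in $\Delta$, matching the linear terms against the definitions of $Q$ and $V_j$ — this is exactly the Taylor expansion $q(U - \Delta(U)) \equiv q(U) - \Delta(U) q'(U) \mod \I^2$ and similarly for $\bfw$, which reproduces the defining formulas for $Q$ and $\bfV$. Finally (iv) follows by combining (ii) with (iii): since $\langle Q(U), x_j - V_j(U), \ldots\rangle$ equals (mod $\I^2$) the ideal generated by $q(T)$, $U - T - \Delta(T)$ and $x_j - w_j(T)$, and $\bfF$ vanishes mod $q(T)$ mod $\I^2$ on the latter by (ii), the polynomial $\bfF(\bfV(U))$ lies in that ideal reduced appropriately, i.e. $\bfF(\bfV(U)) \equiv 0 \mod Q(U) \mod \I^2$; alternatively one re-runs the Taylor argument of (ii) directly with $\bfw \rightsquigarrow \bfV$, $q \rightsquigarrow Q$, using that $\bfV \equiv \bfw$ and $Q \equiv q$ mod $\I$ so the perturbation is again $O(\I^2)$.

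The main obstacle I anticipate is purely organizational rather than conceptual: keeping straight the three simultaneous ``moduli'' ($\bmod\, q(T)$ or $\bmod\, Q(T)$, $\bmod\, \I$ or $\bmod\, \I^2$, and the polynomial-vs-ideal level in (iii)), and making sure that each truncation (reducing mod $q$ before substituting, inverting the Jacobian in the truncated ring, expanding Taylor series only to first order) is justified — in particular that none of these operations can lower the $\I$-adic valuation. Once the lemma ``$a \equiv 0 \mod \I \Rightarrow$ any polynomial expression with $\I$-integral coefficients in $a$ with no constant term is $\equiv 0 \mod \I$, and $\equiv (\text{linear part}) \mod \I^2$'' is stated cleanly at the outset, all five parts reduce to one- or two-line Taylor computations.
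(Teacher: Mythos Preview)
Your proposal is correct and follows the natural route --- Taylor expansion to first order in the $\I$-adic correction, plus the change of variable $U=T+\Delta(T)$ (with inverse $T\equiv U-\Delta(U)\bmod \I^2$) to pass between the two generating sets in (iii). The one place where a reader might want an extra line is in (iii): when you compare $q(U-\Delta(U))\equiv q(U)-\Delta(U)q'(U)$ with $Q(U)=q(U)-\bigl(\Delta(U)q'(U)\bmod q(U)\bigr)$, the discrepancy is a multiple $m(U)q(U)$ with $m(U)\in\I$, and one must observe that $m(U)q(U)\equiv m(U)Q(U)\bmod\I^2$ (since $q-Q\in\I$) to see that this discrepancy actually lies in the right-hand ideal. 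The analogous remark applies to the $x_j-w_j(T)$ generators. Your sketch implicitly handles this, but making it explicit would close the only loose end.

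As for comparison with the paper: the paper does not prove Proposition~\ref{prop1} at all --- it simply attributes the result to \cite{Grobner-free} (Giusti, Lecerf, Salvy) and moves on. So there is no ``paper's own proof'' to compare against; your direct argument is exactly the kind of verification one would expect, and is in the same spirit as the original source.
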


\subsection{First Construction}
Our first variation of Definition \ref{constr1} will have the property that it agrees to the approximate RUR obtained from the approximate roots via Lagrange interpolation as was described in the Preliminaries. We give our definition over some general coefficient ring $R$, but later we will use $R=\K \subset \C$,   or $\Q[t]/\I^2$. We need the following assumptions:

\begin{assumption}\label{qassu} Let $\bfF=(F_1, \ldots, F_n)$, $u=\lambda_1x_1+\cdots+\lambda_n x_n$, $q(T)$ and $\vv(T)=(v_1(T),\ldots , v_n(T))$ polynomials over some Euclidean domain $R$  as above. We assume that 
\begin{enumerate}
\item $q(T)$ is monic and has degree $d$, 
\item $v_i(T)$ has degree at most $d-1$,
\item $\frac{\partial q(T)}{\partial T}$ is invertible modulo $q(T)$,
\item  $\lambda_1v_1(T)+\cdots +\lambda_n v_n(T) = T$,
\item    $J_\bfF(\vv(T))  := \left[\frac{\partial F_i}{\partial x_j}(\vv(T))\right]_{i,j=1}^n$  is invertible modulo $q(T)$.
\end{enumerate}
\end{assumption}

Our first construction for the update is defined as follows:

\begin{definition}\label{constr2} Assume that  $\bfF$, $u$, $q(T)$ and $\vv(T)$ satisfy Assumption \ref{qassu}. Then we define
{\small \begin{eqnarray}
&&u=\sum_{i=1}^n\lambda_i x_i=T \nonumber\\
&&\bfw(T):=\vv(T)-   \left(J_\bfF (\vv(T))^{-1}\bfF(\vv(T))\mod q(T)\right), \nonumber\\
&&\Delta(T):=\sum_{i=1}^n \lambda_i w_i(T) -T  \text{ so far the same as in Definition \ref{constr1}},\\
\label{tVV}&&\tilde{\bfV}(T+\Delta(T))= \left[\begin{array}{c}\tilde{V}_1(T+\Delta(T))\\\vdots\\\tilde{V}_n(T+\Delta(T))\end{array}\right]:= \bfw(T)\mod q(T),\\
&&\Delta\tilde{Q}(T+\Delta(T)):=-(T+\Delta(T))^d \mod q(T)  \label{DtQ}\\
&& \tilde{Q}(T):=\Delta\tilde{Q}(T) + T^d  \label{tQ}
\end{eqnarray}}
\end{definition}
Note that in Definition \ref{constr2} we define  $\tilde{\bfV}(T+\Delta(T))$ and not   $\tilde{\bfV}(T)$, but the coefficients of $\tilde{V}_i(T)$ can be obtained as  solutions of  linear systems. Similarly for the coefficients of $\Delta\tilde{Q}(T)$. In the next proposition we examine  these linear systems and give conditions on the existence and uniqueness of their solutions. 

\begin{proposition} \label{unique}  The coefficients of the polynomials $\tilde{V}_i(T)$ in (\ref{tVV}) for $i=1, \ldots, n$, and the coefficients of the polynomial $\Delta\tilde{Q}(T)=\tilde{Q}(T)-T^d$ in (\ref{DtQ}) are the solutions of  $d\times d$ linear systems  with a common coefficient matrix that has columns  which are 
 the coefficient vectors of 
$$(T+\Delta(T))^j \mod q(T)\quad  \text{ for } j=0, \ldots, d-1.
$$
This coefficient matrix is non-singular if and only if  $u$ is   a primitive element for the ideal $\langle q(T), x_1-w_1(T), \ldots, x_n-w_n(T)\rangle. $
\end{proposition}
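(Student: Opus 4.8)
The plan is to read the linear systems off the defining identities and then reinterpret their common matrix inside a $d$-dimensional quotient algebra. First I would write $\tilde V_i(S) = \sum_{j=0}^{d-1} c_{i,j}\,S^j$ — the bound $\deg \tilde V_i \le d-1$ being precisely the normalization that makes $\tilde V_i$ an admissible RUR coordinate and makes the coefficient vector well defined — substitute $S = T + \Delta(T)$, and reduce modulo $q(T)$. Since reduction modulo $q(T)$ is $\K$-linear and lands in the span of $1, T, \ldots, T^{d-1}$, the coefficient vector (in that monomial basis) of $\tilde V_i(T + \Delta(T)) \bmod q(T)$ equals $M\,(c_{i,0}, \ldots, c_{i,d-1})^{\top}$, where the $j$-th column of $M$ is the coefficient vector of $(T + \Delta(T))^j \bmod q(T)$, $j = 0, \ldots, d-1$. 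Identity~(\ref{tVV}) forces this vector to be the coefficient vector of $w_i(T) \bmod q(T)$, which is the claimed $d \times d$ system; letting $i$ range over $1, \ldots, n$ gives $n$ systems with the same matrix $M$. The identical computation with $\Delta\tilde Q(S) = \sum_{j=0}^{d-1} e_j\,S^j$ (the degree bound forced by $\tilde Q = \Delta\tilde Q + T^d$ being monic of degree $d$) turns~(\ref{DtQ}) into one further system $M\,(e_0, \ldots, e_{d-1})^{\top} = -\big(\text{coefficient vector of } (T + \Delta(T))^d \bmod q(T)\big)$ with the same $M$. This settles the first assertion and identifies the columns of $M$.

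For the ``if and only if'' I would pass to a field — the fraction field of $R$, which is just $\K$ in the case of interest $R = \K \subseteq \C$; both ``non-singular'' and ``primitive element'' are read over it — and form $A := \K[T, x_1, \ldots, x_n]/\langle q(T),\, x_1 - w_1(T), \ldots, x_n - w_n(T)\rangle$. Since $q(T)$ is monic of degree $d$ and each $w_i(T)$ has degree at most $d-1$, the standard elimination argument (the same one showing that the set~(\ref{GB}) is a lexicographic Gr\"obner basis) gives a $\K$-algebra isomorphism $A \cong \K[T]/(q(T))$ under $x_i \mapsto w_i(T)$; in particular $\dim_\K A = d$ with basis $1, T, \ldots, T^{d-1}$. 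Under this isomorphism $u = \lambda_1 x_1 + \cdots + \lambda_n x_n$ is sent to $\lambda_1 w_1(T) + \cdots + \lambda_n w_n(T) = T + \Delta(T)$, by the definition of $\Delta(T)$ in Definition~\ref{constr2}. Hence the powers $1, u, u^2, \ldots, u^{d-1}$ in $A$ correspond to $1, (T + \Delta(T)), \ldots, (T + \Delta(T))^{d-1}$ in $\K[T]/(q(T))$, whose coordinate vectors in the basis $1, T, \ldots, T^{d-1}$ are exactly the columns of $M$. Therefore $M$ is non-singular $\iff$ these $d$ powers of $u$ form a $\K$-basis of $A$ $\iff$ (as $\dim_\K A = d$) $u$ is a primitive element of $\langle q(T),\, x_1 - w_1(T), \ldots, x_n - w_n(T)\rangle$, in the sense recalled in the Preliminaries.

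I expect the only genuinely delicate point to be this last set-up: checking that $A$ is honestly $d$-dimensional — which hinges on $q$ being monic of degree $d$, not on any squarefreeness — and pinning down that $u \mapsto T + \Delta(T)$, so that the columns of $M$ really are the coordinate vectors of consecutive powers of $u$. The first part is then routine linear algebra, and once the dictionary between $A$ and $\K[T]/(q(T))$ is in place the equivalence with primitivity is immediate.
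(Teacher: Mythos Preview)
Your proposal is correct and follows essentially the same line as the paper's proof: unpack the defining identities~(\ref{tVV}) and~(\ref{DtQ}) to read off the common coefficient matrix, then use $T+\Delta(T)=\sum_i \lambda_i w_i(T)=u(\bfw(T))$ to identify nonsingularity with linear independence of $1,\,T+\Delta(T),\ldots,(T+\Delta(T))^{d-1}$ modulo $q(T)$, hence with primitivity of $u$ for the updated ideal. Your version is more explicit in setting up the isomorphism $A\cong \K[T]/(q(T))$ and checking $\dim_\K A=d$, which the paper leaves implicit, but the argument is the same.
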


\begin{proof}
A closer look of the definition in (\ref{tVV})  and (\ref{DtQ}) gives the coefficient matrix of the linear systems defining $\tilde{V}_i(T)$ and $\Delta\tilde{Q}(T)$ as stated, with a common coefficient matrix. This   linear system  has unique solution if and only if $\;1, \,T+\Delta(T), \ldots, (T+\Delta(T))^{d-1}$ are linearly independent modulo $q(T)$, and since 
\begin{equation}\label{uww}
T+\Delta(T)=\sum_{I=1}^n \lambda_iw_i(T)=u(\ww(T)),
\end{equation}
this is equivalent to $u$ being a primitive element of the updated system. \end{proof}

The following proposition compares Definitions \ref{constr1} and \ref{constr2} in cases when  the coefficient ring is  $R=\Q[t]/\I^2$. 
\begin{proposition} Assume that the conditions of Assumption \ref{modassu} are satisfied and that $u$ is a primitive element for $\langle q(T), x_1-w_1(T), \ldots, x_n-w_n(T)\rangle$ as in Proposition \ref{unique}. 
 Let $\bfV(T)$ and $Q(T)$ be as in Definition \ref{constr1} and $\tilde{\bfV}(T)$ and $\tilde{Q}(T)$ be as in Definition \ref{constr2} for $R=\Q[t]/\I^2$.  Then ${\bfV}(T)\equiv \tilde{\bfV}(T)$ and $ Q(T)\equiv\tilde{Q}(T)$ mod $\I^2$. 
\end{proposition}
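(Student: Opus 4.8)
The plan is to show that both constructions, when reduced modulo $\I^2$, produce the same RUR up to the fact that Definition~\ref{constr1} uses the "shifted" variable $U = T + \Delta(T)$ while Definition~\ref{constr2} already absorbs the shift. Concretely, recall from Proposition~\ref{prop1}(iii) that the ideal generated by $q(T)$, $U - T - \Delta(T)$, $x_i - w_i(T)$ equals the ideal generated by $Q(U)$, $T - U - \Delta(U)$, $x_i - V_i(U)$ modulo $\I^2$; this is the bridge. In Definition~\ref{constr2}, equation~(\ref{tVV}) defines $\tilde\bfV$ by $\tilde V_i(T + \Delta(T)) \equiv w_i(T) \bmod q(T)$, and equations~(\ref{DtQ})--(\ref{tQ}) define $\tilde Q$ by $\tilde Q(T + \Delta(T)) \equiv (T+\Delta(T))^d - (T+\Delta(T))^d + \text{(the remainder)} $, i.e. $\tilde Q(T+\Delta(T)) \equiv 0 \bmod q(T)$ after accounting for the leading term. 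So both $\tilde V_i$ and $\tilde Q$ are characterized by: evaluate at the new primitive element $u(\ww(T)) = T + \Delta(T)$ and match against the data coming from $\ww$ and $q$.

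First I would make precise the characterization of $\bfV(T)$ and $Q(T)$ from Definition~\ref{constr1} in the same "evaluation" form. The key algebraic fact is that if $\beta(T) := T + \Delta(T)$ is a primitive element for $\langle q(T), x_1 - w_1(T), \ldots\rangle$, then the change of variables $U \mapsto \beta(T)$ induces an isomorphism $R[U]/\langle Q(U)\rangle \to R[T]/\langle q(T)\rangle$ sending $U \mapsto \beta(T)$. Under this isomorphism, $V_i(U) \bmod Q(U)$ corresponds to the element $w_i(T) \bmod q(T)$ — this is exactly what Proposition~\ref{prop1}(iii) says once one unwinds the ideal equality into a statement about the quotient rings. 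Hence $V_i(\beta(T)) \equiv w_i(T) \bmod q(T)$, which is precisely the defining relation~(\ref{tVV}) for $\tilde V_i$. Since $\deg V_i \le d - 1$ and $\deg \tilde V_i \le d-1$, and both are uniquely determined by their values modulo $q$ at the primitive element (Proposition~\ref{unique} guarantees the relevant $d \times d$ system is nonsingular), we get $V_i \equiv \tilde V_i \bmod \I^2$. For $Q$ versus $\tilde Q$: $Q(\beta(T)) \equiv 0 \bmod q(T)$ because $Q(U)$ generates the new ideal which contains $q(T)$ after the substitution; both $Q$ and $\tilde Q$ are monic of degree $d$; their difference has degree $\le d-1$ and vanishes modulo $q$ at $\beta(T)$, hence is zero by the same nonsingularity, giving $Q \equiv \tilde Q \bmod \I^2$.

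The cleanest way to organize the computation is probably: (1) observe $T^2$-type nilpotence — since we work mod $\I^2$ and $\Delta(T) \equiv 0 \bmod \I$, any product of two quantities each $\equiv 0 \bmod \I$ vanishes, which linearizes all the Taylor expansions; (2) use this to verify directly that $Q(U) := q(U) - (\Delta(U) q'(U) \bmod q(U))$ satisfies $Q(T + \Delta(T)) \equiv q(T + \Delta(T)) - \Delta(T) q'(T) \equiv q(T) + \Delta(T) q'(T) - \Delta(T) q'(T) \equiv 0 \bmod q(T)$ modulo $\I^2$, using $q(T + \Delta(T)) \equiv q(T) + \Delta(T) q'(T) \bmod (q(T), \I^2)$ — wait, one must be careful that $q(T) \equiv 0$ only modulo the ideal $\langle q(T)\rangle$, so this computation lives in $R[T]/\langle q(T)\rangle$; (3) similarly check $\bfV(T + \Delta(T)) \equiv \bfw(T) \bmod (q(T), \I^2)$ directly from the formula $\bfV(U) = \bfw(U) - \Delta(U)\,\partial_U \bfw(U) \bmod q(U)$ by substituting $U = T + \Delta(T)$ and expanding $\bfw(T + \Delta(T)) \equiv \bfw(T) + \Delta(T)\,\partial_T\bfw(T)$, so the $\Delta(T)\,\partial_T\bfw$ terms cancel modulo $\I^2$. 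Then conclude by uniqueness as above.

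The main obstacle I anticipate is bookkeeping the two different roles of $T$: in Definition~\ref{constr1} the polynomials $Q$, $\bfV$ are expressed in a variable (call it $U$) that is the \emph{old} primitive element evaluated on $\ww$, whereas Definition~\ref{constr2}'s $\tilde Q$, $\tilde\bfV$ are expressed in the variable that is the \emph{new} primitive element — these coincide because $u(\ww(T)) = T + \Delta(T)$ by~(\ref{uww}), but threading the substitution consistently through Proposition~\ref{prop1}(iii) (which is itself phrased as an ideal equality with mixed variables $U$ and $T$) requires care. A secondary technical point is justifying that "equal modulo $q$ at the primitive element, same degree bound, both monic" forces equality: this is exactly the content of Proposition~\ref{unique}, so it should be quoted rather than reproved, but one must check its hypothesis — that $u$ is primitive for the updated ideal — which is granted in the statement. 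Once the substitution bookkeeping is set up, the rest is the routine first-order Taylor cancellation that Newton iteration always produces.
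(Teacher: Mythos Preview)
Your proposal is correct and follows essentially the same route as the paper: extract from Proposition~\ref{prop1}(iii) the relations $V_i(T+\Delta(T))\equiv w_i(T)$ and $Q(T+\Delta(T))\equiv 0$ modulo $q(T)$ and $\I^2$, then invoke the uniqueness guaranteed by Proposition~\ref{unique} (together with monicity of $Q$ for the $\tilde Q$ part) to conclude $\bfV\equiv\tilde\bfV$ and $Q\equiv\tilde Q$. The direct first-order Taylor verification you sketch in items (1)--(3) is a valid alternative derivation of those same relations, but the paper simply quotes Proposition~\ref{prop1}(iii) and moves straight to uniqueness.
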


\begin{proof}
From Proposition \ref{prop1}.(iii) we get $
V_i(T+\Delta(T))\equiv w_i(T) \text{ and } Q(T+\Delta(T))\equiv 0 \mod q(T)\mod \I^2.
$ 
Since $Q(T)$ is a monic polynomial of degree $d$,  the coefficients of its  degree $\leq d-1$ terms are uniquely determined modulo $q(T)$, so  
we have that 
 $$
Q(T+\Delta(T))-(T+\Delta(T))^d\equiv -(T+\Delta(T))^d\mod q(T) \mod \I^2.
$$
Since $\tilde{\bfV}$ and $\tilde{Q}$ are uniquely defined by these properties, they must be equal to $\bfV$ and $Q$ respectively. \end{proof}

The following proposition connects the approximate RUR defined in Definition \ref{constr2} to the ones obtained by applying one step of Newton iteration on the  approximate roots, as promised in the Introduction: 
\begin{proposition}\label{roots}
Let $\bfF=(F_1, \ldots, F_n)\subset \K[x_1, \ldots, x_n]$ as above. Assume that the polynomials  
$
u=\lambda_1x_1+\cdots +\lambda_nx_n,  \;q(T), \;{\bf v}(T):=(v_1(T), \ldots, v_n(T))
$
satisfy Assumption \ref{qassu}. Let $\bfz_1, \ldots, \bfz_d\in \ovK^n$ be the exact roots of 
$
\langle 
q(T), x_1-v_1(T), \ldots, x_n-v_n(T)\rangle \cap\K[x_1, \ldots, x_n]$ where $\ovK$ is the algebraic closure of $\K$. 
 Let 
$$
\tilde{\bfz}_i:= \bfz_i- J_\bfF (\bfz_i)^{-1}\bfF(\bfz_i) \quad i=1, \ldots, d 
$$
be one step of Newton iteration. Assume that $u(\tilde{\bfz}_i)\neq u(\tilde{\bfz}_j)$ for $i\neq j$. Then $\tilde{Q}(T), \tilde{V}_1(T), \ldots, \tilde{V}_n(T)$ defined in Definition \ref{constr2} is the exact RUR of $\{\tilde{\bfz}_1, \ldots, \tilde{\bfz}_d\}$, with  
$
\sum_{i=1}^n \lambda_i\tilde{V}_i(T) =T.
$
  \end{proposition}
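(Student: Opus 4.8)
The plan is to verify the four defining properties of the RUR of $\{\tilde{\bfz}_1,\ldots,\tilde{\bfz}_d\}$ directly from the explicit formulas in Definition~\ref{constr2}, transferring the statements about the roots $\bfz_i$ and their Newton updates $\tilde{\bfz}_i$ to statements about the polynomials $\bfw(T)$, $\tilde{Q}(T)$ and $\tilde{\bfV}(T)$. First I would observe that since $\{q(T), x_1-v_1(T),\ldots,x_n-v_n(T)\}$ is a Gr\"obner basis (as noted after the RUR definition), its common roots $\bfz_i$ satisfy $\bfz_{i,j}=v_j(\mu_i)$ where $\mu_i := u(\bfz_i)$, and the $\mu_i$ are exactly the roots of $q(T)$; in particular $q(T)=\prod_{i=1}^d(T-\mu_i)$. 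Evaluating $\bfw(T)=\vv(T)-\big(J_\bfF(\vv(T))^{-1}\bfF(\vv(T))\bmod q(T)\big)$ at $T=\mu_i$ and using that evaluation at $\mu_i$ is a ring homomorphism $\K[T]/q(T)\to\ovK$, I get $\bfw(\mu_i)=\bfz_i-J_\bfF(\bfz_i)^{-1}\bfF(\bfz_i)=\tilde{\bfz}_i$, using that $J_\bfF(\vv(\mu_i))=J_\bfF(\bfz_i)$ is invertible (Assumption~\ref{qassu}.5 gives invertibility mod $q(T)$, hence at each $\mu_i$). Consequently $\Delta(\mu_i)=\sum_j\lambda_j w_j(\mu_i)-\mu_i = u(\tilde{\bfz}_i)-\mu_i$, so the new node is $\mu_i+\Delta(\mu_i)=u(\tilde{\bfz}_i)=:\tilde{\mu}_i$, and by hypothesis these are distinct.

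Next I would pin down $\tilde{Q}$. From~(\ref{DtQ})--(\ref{tQ}), $\tilde{Q}(T+\Delta(T))\equiv \big(T+\Delta(T)\big)^d + \Delta\tilde Q(T+\Delta(T)) \equiv 0 \bmod q(T)$ by construction, i.e. $\tilde Q\big(\mu_i+\Delta(\mu_i)\big)=\tilde Q(\tilde\mu_i)=0$ for all $i$. Since $\tilde Q = T^d + \Delta\tilde Q(T)$ with $\deg\Delta\tilde Q\le d-1$ (this is exactly the content of~(\ref{DtQ}), reducing a degree-$d$ quantity mod the monic degree-$d$ polynomial $q$), $\tilde Q$ is monic of degree $d$ with the $d$ distinct roots $\tilde\mu_1,\ldots,\tilde\mu_d$, hence $\tilde Q(T)=\prod_{i=1}^d(T-\tilde\mu_i)$; in particular $\gcd(\tilde Q,\tilde Q')=1$. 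Similarly, from~(\ref{tVV}) $\tilde V_j(T+\Delta(T))\equiv w_j(T)\bmod q(T)$ together with $\deg\tilde V_j\le d-1$ (Proposition~\ref{unique} guarantees these linear systems have a unique solution precisely because $u$ is primitive for $\langle q(T),x_1-w_1(T),\ldots\rangle$, which is the stated hypothesis), so evaluating at $\mu_i$ gives $\tilde V_j(\tilde\mu_i)=w_j(\mu_i)=\tilde z_{i,j}$. Thus $\tilde V_j$ is the degree-$\le d-1$ Lagrange interpolant through the $\tilde\mu_i$ with values $\tilde z_{i,j}$ — exactly the formula~(\ref{vj}) defining the RUR of $\{\tilde\bfz_1,\ldots,\tilde\bfz_d\}$.

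It then remains to check the two algebraic identities. For $\sum_j\lambda_j\tilde V_j(T)=T$: both sides are polynomials of degree $\le d-1$ (the left side using $\deg\tilde V_j\le d-1$, and one should note $d-1\ge 1$ or handle $d=1$ trivially), and they agree at the $d$ distinct points $\tilde\mu_i$ since $\sum_j\lambda_j\tilde V_j(\tilde\mu_i)=\sum_j\lambda_j\tilde z_{i,j}=u(\tilde\bfz_i)=\tilde\mu_i$; hence they are equal. Finally $F_i(\tilde V_1(T),\ldots,\tilde V_n(T))\equiv 0\bmod\tilde Q(T)$ follows because the left side vanishes at each root $\tilde\mu_i$ of $\tilde Q$ — indeed $F_i(\tilde V_1(\tilde\mu_k),\ldots,\tilde V_n(\tilde\mu_k))=F_i(\tilde\bfz_k)$, and $\tilde\bfz_k$ is a common zero of $\bfF$ because $\tilde\bfz_k=u^{-1}$-fiber... more precisely, $\tilde\bfz_k$ is the common root of $\langle \tilde Q(T), x_1-\tilde V_1(T),\ldots\rangle$ and one must argue $\bfF$ vanishes there. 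The main obstacle, and the one place to be careful, is precisely this last point: \emph{a priori} $\tilde\bfz_k = \bfw(\mu_k)$ is just one Newton step from $\bfz_k$ and need not be an exact zero of $\bfF$, so $F_i(\tilde\bfz_k)$ need not be zero, and therefore $F_i(\tilde V_1(T),\ldots,\tilde V_n(T))\bmod\tilde Q(T)$ need not vanish. Re-reading the statement, it only claims $\tilde Q,\tilde V_j$ form \emph{the exact RUR of the point set} $\{\tilde\bfz_1,\ldots,\tilde\bfz_d\}$ (properties~(\ref{qT}),~(\ref{vj}) plus $\sum\lambda_j\tilde V_j=T$), not an RUR of a component of $\J$ in the full sense of the Definition; so the proof should stop after establishing the interpolation identities of the previous paragraph and the linear identity above, and I would not attempt to prove $\bfF\equiv 0$ mod $\tilde Q$, which is generally false. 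The real work, then, is the bookkeeping that ``evaluate at $\mu_i$'' commutes with all the modular reductions in Definition~\ref{constr2}, plus the degree counts that make the Lagrange-interpolation characterization applicable — both routine once the homomorphism $\K[T]/q(T)\to\prod_i\ovK$, $g\mapsto(g(\mu_i))_i$, which is an isomorphism since $q$ is squarefree, is made explicit.
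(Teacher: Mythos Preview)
Your proposal is correct and follows essentially the same route as the paper: evaluate the polynomial identities of Definition~\ref{constr2} at the roots $\mu_i$ of $q(T)$ to obtain $\bfw(\mu_i)=\tilde{\bfz}_i$, $\mu_i+\Delta(\mu_i)=\tilde\mu_i$, and hence $\tilde V_j(\tilde\mu_i)=\tilde z_{i,j}$ and $\tilde Q(\tilde\mu_i)=0$; then conclude by a degree count. Your self-correction is also right --- the proposition asserts only that $\tilde Q,\tilde V_j$ give the RUR \emph{of the point set} $\{\tilde\bfz_i\}$, not that $\bfF\equiv 0\bmod\tilde Q$.

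The only (cosmetic) difference is in the last identity $\sum_j\lambda_j\tilde V_j(T)=T$. You prove it by checking agreement at the $d$ distinct nodes $\tilde\mu_i$ and invoking a degree bound. The paper instead sums the defining relations~(\ref{tVV}) directly to get
\[
\sum_{j}\lambda_j\,\tilde V_j\bigl(T+\Delta(T)\bigr)\equiv\sum_j\lambda_j w_j(T)=T+\Delta(T)\pmod{q(T)},
\]
and then uses (implicitly) the invertibility of the change-of-basis matrix from Proposition~\ref{unique} to conclude that the degree-$\le d-1$ polynomial $\sum_j\lambda_j\tilde V_j(S)-S$ vanishes. Unwinding either argument at the level of evaluations at $\mu_i$ shows they are the same; your version is arguably the more transparent of the two.
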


\begin{proof} 
Using the notation $\bfz_i=(z_{i,1}, \ldots, z_{i,n})$  and $\tilde{\bfz}_i=(\tilde{z}_{i,1}, \ldots, \tilde{z}_{i,n})$ we define 
$$
\mu_i := u({\bfz}_i)=\sum_{j=1}^n\lambda_j z_{i,j} \text{ and } \tilde{\mu}_i := u(\tilde{\bfz}_i)=\sum_{j=1}^n\lambda_j\tilde{z}_{i,j} \;\;i=1, \ldots, d.
$$
Then  for all $i=1, \ldots, d$ and $j=1, \ldots, n$ we have 
$
q(\mu_i)=0, \; w_j(\mu_i) = \tilde{z}_{i,j} \text{ and } \Delta(\mu_i)=\tilde{\mu_i}-\mu_i.
$
Note that $u$ is a primitive element for $\langle q(T), x_1-w_1(T), \ldots, x_n-w_n(T)\rangle$ if and only if $\tilde{\mu}_i\neq \tilde{\mu}_j$ for $i\neq j$. 
In this case for all $i=1, \ldots, d$ and $j=1, \ldots, n$  we have from (\ref{tVV}), (\ref{DtQ}) and (\ref{tQ}) that 
$$\tilde{V}_j(\tilde{\mu}_i)=w_j(\mu_i)=\tilde{z}_{i,j} \text{ and } \tilde{Q}(\tilde{\mu}_i)=\Delta\tilde{Q}(\tilde{\mu}_i)+\tilde{\mu}_i^d=0.$$
This proves that $\tilde{Q}(T), \tilde{V}_1(T), \ldots, \tilde{V}_n(T)$ is the exact RUR of $\{\tilde{\bfz}_1, \ldots, \tilde{\bfz}_d\}$. Finally, the last claim follows from
$$
\sum_{i=1}^n \lambda_i\tilde{V}_i(T+\Delta(T)) = \sum_{i=1}^n \lambda_iw_i(T)=T+\Delta(T).\quad
$$
\end{proof}

\begin{corollary} Let $R=\K\subset\C$  be a field  equipped with the usual absolute value, and consider the Euclidean norm on the coefficient vectors of polynomials over~$\C$. 
Then the  iteration defined by Definition \ref{constr2} is locally quadratically convergent to an exact RUR of a component of $\langle \bfF \rangle $ over an algebraic extension of $\K$, as long as  Assumption \ref{qassu} is satisfied in each iteration.
\end{corollary}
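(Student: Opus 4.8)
The plan is to reduce the corollary to the classical local quadratic convergence of the ordinary $n\times n$ Newton iteration, using Proposition~\ref{roots} as the bridge between the RUR-level iteration of Definition~\ref{constr2} and root-level Newton steps. First I would fix an exact rational component $\{\bfzeta_1,\ldots,\bfzeta_d\}\subset V(\langle\bfF\rangle)$ over an algebraic extension of $\K$, with its exact RUR $(q^*(T),v_1^*(T),\ldots,v_n^*(T))$, and observe that since $\langle\bfF\rangle$ is radical and zero-dimensional, each $\bfzeta_i$ is a simple root, so $J_\bfF(\bfzeta_i)$ is invertible. By continuity of the determinant there is a neighborhood (in the Euclidean norm on coefficient vectors) of this exact RUR on which Assumption~\ref{qassu} holds: $q(T)$ stays monic of degree $d$ with $\gcd(q,q')=1$ (so $q'$ is invertible mod $q$), and $J_\bfF(\vv(T))$ stays invertible mod $q(T)$. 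The hypothesis of the corollary is precisely that we remain in such a region, so all the constructions in Definition~\ref{constr2} are well-defined along the iteration.

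Next I would make the correspondence between RURs near the exact one and $d$-tuples of points near $(\bfzeta_1,\ldots,\bfzeta_d)$ explicit and bi-Lipschitz. Given an RUR satisfying Assumption~\ref{qassu} and sufficiently close to the exact one, the $d$ roots $\mu_1,\ldots,\mu_d$ of $q(T)$ are simple and close to the $\mu_i^* = u(\bfzeta_i)$, hence pairwise distinct, and the associated points $\bfz_i := (v_1(\mu_i),\ldots,v_n(\mu_i))$ are close to $\bfzeta_i$. Conversely, given $d$ distinct points close to the $\bfzeta_i$ on which $u$ separates, formulas (\ref{qT})--(\ref{vj}) recover an RUR; since the relevant Vandermonde/confluent-Vandermonde matrix in the nodes $\mu_i$ is invertible and depends continuously on the data, this map and its inverse are locally Lipschitz. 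By Proposition~\ref{roots}, one step of the Definition~\ref{constr2} iteration on the RUR corresponds, under this correspondence, to applying one ordinary Newton step $\bfz_i\mapsto\bfz_i - J_\bfF(\bfz_i)^{-1}\bfF(\bfz_i)$ to each $\bfz_i$ independently (the separation hypothesis $u(\tilde\bfz_i)\neq u(\tilde\bfz_j)$ being automatic near the fixed point, since the $\tilde\bfz_i$ are even closer to the distinct $\bfzeta_i$). Standard Newton theory (e.g.\ the Newton--Kantorovich estimate, or the reference to Smale's $\alpha$-theory already cited in the paper) gives, for each $i$, a constant $C_i$ and a radius $r_i$ with $\norm{\tilde\bfz_i - \bfzeta_i}\le C_i\norm{\bfz_i-\bfzeta_i}^2$ whenever $\norm{\bfz_i-\bfzeta_i}<r_i$.

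Finally I would chain the estimates: let $\eta$ denote the coefficient-norm distance from the current RUR to the exact RUR. The forward Lipschitz bound gives $\max_i\norm{\bfz_i-\bfzeta_i}\le L_1\eta$; the quadratic Newton bound gives $\max_i\norm{\tilde\bfz_i-\bfzeta_i}\le C(L_1\eta)^2$; and the backward Lipschitz bound (from points back to RUR, valid since the updated nodes $\tilde\mu_i$ are still well-separated) gives that the distance from the updated RUR $(\tilde Q,\tilde V_1,\ldots,\tilde V_n)$ to the exact one is at most $L_2 C L_1^2\,\eta^2$. Shrinking the initial neighborhood so that $\eta$ is small enough that $L_2CL_1^2\eta<1$ and that all points stay within the $r_i$, this shows the iteration is well-defined and converges quadratically to the exact RUR. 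The main obstacle I anticipate is controlling the backward map uniformly: one must ensure the updated nodes $\tilde\mu_i$ do not collide and that the interpolation (Vandermonde) step has a condition number bounded independently of where we are in the shrunk neighborhood — this is handled by a compactness/continuity argument around the fixed point, where the $\mu_i^*$ are distinct by choice of the primitive element $u$, but it is the step that requires genuine care rather than routine estimation.
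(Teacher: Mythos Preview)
Your proposal is correct and follows exactly the approach the paper intends: the corollary is stated in the paper without proof, as an immediate consequence of Proposition~\ref{roots}, which identifies the Definition~\ref{constr2} update with root-level Newton steps composed with the (locally bi-Lipschitz) roots\,$\leftrightarrow$\,RUR correspondence of (\ref{qT})--(\ref{vj}). You have simply spelled out the details the paper leaves implicit --- the bi-Lipschitz transfer of the quadratic estimate through the Vandermonde interpolation map and the node-separation check near the fixed point --- and those details are handled correctly.
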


\subsection{Second Construction}

Our second variation of Definition \ref{constr1} will have the property that it can be interpreted as an $(n+1)d$ dimensional Newton iteration  as follows. Given $\bfF=(F_1, \ldots, F_n)$ and $u=\sum_{i=1}^n \lambda_ix_i$ in $R[x_1, \ldots, x_n]$ as before, we define the map $$\Phi:R^{(n+1)d}\rightarrow R^{(n+1)d}$$ as the map of the coefficient vectors of the following degree $d-1$ polynomials:
{\small \begin{equation}\label{Phi}
\Phi : \;\;\left[\begin{array}{c}
v_1(T) \\ \vdots\\ v_n(T)\\ \Delta q(T) 
\end{array}\right] \mapsto \left[\begin{array}{c}
F_1(\vv(T))\hspace{-.3cm}\mod q(T)\\
 \vdots\\
F_n(\vv(T))\hspace{-.3cm}\mod q(T)\\
\sum_{i=1}^n \lambda_i v_i(T) -T
  \end{array}\right],
\end{equation}}
where 
$$
 \Delta q(T):= q(T)-T^d.
 $$
If $u, q(T), v_1(T), \ldots, v_n(T)$ is an exact RUR of a component of $\langle \bfF\rangle$ then
$$
\Phi\left(v_1(T), \ldots, v_n(T), \Delta q(T) \right)=0.
$$
So one can apply the $(n+1)d$ dimensional Newton iteration  to locally converge  to the coefficient vector  of an exact RUR which is a zero of $\Phi$. Note that below we will consider the map $\Phi$ as a map between 
$$
\Phi:\left( R[T]/\langle q(T) \rangle \right)^{n+1} \rightarrow \left( R[T]/\langle q(T) \rangle \right)^{n+1}, 
$$
and note that $\left( R[T]/\langle q(T) \rangle \right)^{n+1}$ and $R^{(n+1)d}$ are isomorphic as vectors spaces  when $R=\K$ a field. Moreover, as we will see below,  the Newton iteration for $\Phi$ respects the algebra structure of  $\left( R[T]/\langle q(T) \rangle \right)^{n+1}$ as well.

The first lemma gives the Jacobian matrix of $\Phi$.

\begin{lemma}\label{PhiLemma} Let $\bfF=(F_1, \ldots, F_n)$, u, $q(T)$, $\vv(T)$ and $\Phi$ be as above.  For $i=1, \ldots, n$ define $m_i(T)$ and $r_i(T)$ as the quotient  and remainder in the division with remainder:
\begin{equation}\label{rm}
F_i(\vv(T))=m_i(T)q(T) + r_i(T).
\end{equation}
Then the Jacobian matrix of $\Phi$ defined in (\ref{Phi}) and considered as a map on $
\left(R[T]/\langle q(T) \rangle \right)^{n+1}$  is given by
{\small \begin{equation}\label{JacobPhi}
J_\Phi( \vv(T) ,\Delta q(T))
:=
\begin{array}{|ccc|c|c}
\multicolumn{3}{c}{\scriptsize{n}}&\multicolumn{1}{c}{\scriptsize{1}}\\
\cline{1-4}
      & & & -m_1(T)&\\
      & J_\bfF(\vv(T)) & &\vdots& n\\
      & & & -m_n(T)&\\
      \cline{1-4}
   \lambda_1& \cdots &\lambda_n & 0 & 1  \\
\cline{1-4} \multicolumn{2}{c}{}
\end{array}\mod q(T).
\end{equation}}
\end{lemma}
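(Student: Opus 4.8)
The plan is to compute the Jacobian of $\Phi$ componentwise, treating $\Phi$ as a map on the product ring $\left(R[T]/\langle q(T)\rangle\right)^{n+1}$, whose coordinates are $v_1(T),\ldots,v_n(T)$ and $\Delta q(T)=q(T)-T^d$. I would perturb the input by writing $v_j(T)\mapsto v_j(T)+\varepsilon h_j(T)$ for $j=1,\ldots,n$ and $\Delta q(T)\mapsto \Delta q(T)+\varepsilon g(T)$, where $h_j,g$ have degree at most $d-1$, and expand each of the $n+1$ output coordinates to first order in $\varepsilon$, remembering that the modulus $q(T)=T^d+\Delta q(T)$ itself moves when $\Delta q$ is perturbed.

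For the last output coordinate, $\sum_i \lambda_i v_i(T)-T$, the linearization is immediate: it contributes the row $(\lambda_1,\ldots,\lambda_n,0)$, since this coordinate is linear in the $v_i$'s and does not depend on $\Delta q$. For the $i$-th coordinate, $F_i(\vv(T))\bmod q(T)$, I would first differentiate $F_i(\vv(T))$ itself by the chain rule: perturbing $v_j$ by $\varepsilon h_j$ changes $F_i(\vv(T))$ by $\varepsilon\sum_{j=1}^n \frac{\partial F_i}{\partial x_j}(\vv(T))\,h_j(T)$, which modulo $q(T)$ gives exactly the block $J_\bfF(\vv(T))$ acting on $(h_1,\ldots,h_n)$. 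The subtle part is the dependence on $\Delta q$: when $q=T^d+\Delta q$ changes by $\varepsilon g$, the value $F_i(\vv(T))\bmod q(T)$ changes because we are reducing modulo a moving ideal. Using the division identity $F_i(\vv(T))=m_i(T)q(T)+r_i(T)$ from~\eqref{rm}, I would write $F_i(\vv(T))\bmod (q+\varepsilon g) = r_i(T) + m_i(T)q(T) - \big(\text{reduction of } m_i(T)q(T) \text{ mod } q+\varepsilon g\big)$; since $q(T)\equiv -\varepsilon g(T)\pmod{q+\varepsilon g}$ up to higher order, $m_i(T)q(T)\equiv -\varepsilon\, m_i(T)g(T)\pmod{q+\varepsilon g, \varepsilon^2}$, so the first-order change is $-\varepsilon\, m_i(T)g(T)\bmod q(T)$. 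This yields the column $(-m_1(T),\ldots,-m_n(T),0)^{\mathsf T}$ multiplying $g$, as claimed. Assembling the rows and columns produces precisely the block matrix in~\eqref{JacobPhi}, where all entries are interpreted as multiplication operators on $R[T]/\langle q(T)\rangle$.

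The main obstacle, and the step deserving the most care, is making rigorous the linearization of the reduction-mod-$q$ operation with respect to the modulus — i.e., justifying $q(T)\equiv -\varepsilon g(T) \pmod{q(T)+\varepsilon g(T)}$ and controlling the $O(\varepsilon^2)$ remainder so that the derivative is well-defined and independent of the lift chosen for $g$. Concretely, one checks that for any polynomial $P$ of degree $<2d$, writing $P = a(T)(q+\varepsilon g) + b(T)$ with $\deg b<d$, the coefficients of $b$ depend smoothly on $\varepsilon$ (the leading coefficient of $q+\varepsilon g$ is $1$, so division with remainder is a polynomial operation in the coefficients), and differentiating at $\varepsilon=0$ gives the stated formula. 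Once this is in place, everything else is a routine chain-rule bookkeeping exercise, and the identification of the resulting linear maps with the displayed block matrix over $R[T]/\langle q(T)\rangle$ is immediate.
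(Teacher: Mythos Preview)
Your proposal is correct and follows essentially the same route as the paper. The only cosmetic difference is that the paper differentiates $r_k(T)=F_k(\vv(T))-m_k(T)q(T)$ with respect to the individual scalar coefficients $v_{i,j}$ and $q_j$ (obtaining, e.g., $\partial r_k/\partial q_j=-m_k(T)T^j\bmod q(T)$) and then recognizes each $d\times d$ block as multiplication by a polynomial, whereas you compute the directional derivative in a polynomial direction $(h_1,\ldots,h_n,g)$ directly on $(R[T]/\langle q(T)\rangle)^{n+1}$; the two computations are equivalent and produce the same block matrix.
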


\begin{proof}
Using the notation $\bfr=(r_1(T), \ldots, r_n(T))$ from (\ref{rm}), we have that 
$$
\Phi( \vv(T), \Delta q(T))= (\bfr(T), \sum_{i=1}^n \lambda_i v_i(T)).
$$
Let $v_{i,j}$ be the coefficient of $T^j$ in $v_i(T)$ for $i=1, \ldots, n$ and $j=0, \ldots, d-1$. 
Then for $k=1, \ldots, n$ 
\begin{eqnarray*}
\frac{\partial \,r_k(T)}{\partial v_{i,j}}&=& \frac{\partial F_k(\vv(T))}{\partial v_{i,j}}-q(T) \frac{\partial m_k(T)}{\partial v_{i,j}} \\
&&= \frac{\partial F_k(\vv(T))}{\partial v_{i,j}} \mod q(T) \\
&&= \frac{\partial F_k}{\partial x_i}(\vv(T))\cdot \frac{\partial v_i(T))}{\partial v_{i,j}}\mod q(T) \\
&&= \frac{\partial F_k}{\partial x_i}(\vv(T)) \cdot T^j \mod q(T).
\end{eqnarray*}
This shows that the block  corresponding to the derivatives of $v_{i,j}$ for $j=0, \ldots, d-1$ defines modular multiplication by the polynomials $\frac{\partial F_k}{\partial x_i} (\vv(T))$, which is an entry of $J_\bfF(\vv(T))$. 
Furthermore, 
$$
\frac{\partial\left(\sum_{k=1}^n \lambda_k v_k(T)\right)}{\partial v_{i,j}}=\lambda_iT^j
$$ which gives the last row of (\ref{JacobPhi}). 
Let $q_j$ be he coefficient of $T^j$ in $\Delta q(T)=q(T)-T^d$ for  $j=0, \ldots, d-1$. Then
\begin{eqnarray*}
\frac{\partial r_k(T)}{\partial q_{j}}&=& \frac{\partial F_k(\vv(T))}{\partial q_{j}}-q(T) \frac{\partial m_k(T)}{\partial q_{j}}- m_k(T) \frac{\partial q(T)}{\partial q_{j}} \\
&&= - m_k(T) \frac{\partial q(T)}{\partial q_{j}} \mod q(T)\\
&& = -m_k(T) T^j  \mod q(T)
\end{eqnarray*}
and $\frac{\partial \lambda_k v_k(T)}{\partial q_{j}}=0$ which gives the last column.
\end{proof}

\begin{remark}
The proof of Lemma \ref{PhiLemma} shows that the Jacobian matrix of $\Phi$ considered as a map on $R^{(n+1)d}$ is the $(n+1)d\times (n+1)d$ matrix obtained from $J_\Phi$ in (\ref{JacobPhi}) by replacing every entry by the $d\times d$ matrix of multiplication modulo $q(T)$ by the polynomial in that entry. In other words, if the polynomial in the $(i,j)$-th entry of $J_\phi$ is $p(T)$ then the $k$-th column of this $d\times d$ block is the coefficient vector of $T^{k-1}p(T) \mod q(T)$. 
\end{remark}

Next, we give explicitly the iteration function corresponding to the Newton iteration on $\Phi$, using polynomial arithmetic modulo $q(T)$. We need the following assumptions:
 
 \begin{assumption}\label{assu3} Let $\bfF(x_1, \ldots, x_n)$, $u=\sum_{i=1}^n \lambda_ix_i$, $q(T)$ and $\vv(T)$ polynomials over some Euclidean domain $R$  as above. We assume that 
{\begin{enumerate}
\item $q(T)$ is monic and has degree $d$, 
\item $v_i(T)$ has degree at most $d-1$,

\item $\frac{\partial q(T)}{\partial T}$ is invertible modulo $q(T)$,
\item  $\lambda_1v_1(T)+\cdots +\lambda_n v_n(T) = T$,
\item    $J_\bfF(\vv(T))  := \left[\frac{\partial F_i}{\partial x_j}(\vv(T))\right]_{i,j=1}^n$  is invertible modulo $q(T)$.
\item $J_\Phi:=J_\Phi(\vv(T), \Delta q(T))$ defined in (\ref{JacobPhi}) is invertible modulo $q(T)$.
\end{enumerate}}
\end{assumption}

\begin{definition}\label{constr3} Let $\bfF(x_1, \ldots, x_n)$, $u(x_1, \ldots, x_n)$, $q(T)$ and $\vv(T)$ be polynomials over $R$ satisfying Assumption \ref{assu3}. Then we define 
{\small \begin{eqnarray}
&&u=\sum_{i=1}^n\lambda_i x_i=T \nonumber\\
&&\bfw(T) :=\vv(T)-   \left(J_\bfF (\vv(T))^{-1}\bfF(\vv(T))\mod q(T)\right), \nonumber\\
&&\Delta(T):=\sum_{i=1}^n \lambda_i w_i(T) -T  \text{ same as in Definitions \ref{constr1} and \ref{constr2}},\label{ww3}\\
&& \bfr(T):= \bfF(\vv(T))\mod q(T)\label{r}\\
&& \bfU(T):=  \frac{\partial \vv(T)}{\partial T} -  \left(J_\bfF (\vv(T))^{-1}\frac{\partial \bfr(T)}{\partial T} \mod q(T)\right), \label{U}\\
&&\Lambda(T):= \sum_{i=1}^n \lambda_i U_i(T)\label{Lambda} \text{ that we will show to be invertible modulo }  q(T)\\
\label{bVV}&&\bar{\bfV}(T):= \bfw(T)- \left( \frac{\Delta(T)}{\Lambda(T)} \bfU(T)\mod q(T)\right),\\
&&\bar{Q}(T):=q(T) - \left(\frac{\Delta(T)}{\Lambda(T)}\frac{\partial q(T)}{\partial T} \mod q(T) \right).  \label{bQ}
\end{eqnarray}}
\end{definition}

\begin{remark} Note that in general derivation and modular arithmetic do not commute, i.e.
$$\frac{\partial p(T)}{\partial T}\hspace{-.3cm}\mod q(T) \neq \frac{\partial ( p(T)\hspace{-.3cm} \mod q(T))}{\partial T}.
$$
For example if $p=T^2$ and $q=T^2-1$ then the left hand side is $2T$ but the right hand side is $0$. That is why we first had to introduce the reduced form of $\bfF(\vv(T))$ modulo $q(T)$ in (\ref{r}) and then use its derivative by $T$ in (\ref{U}).

\end{remark}

\begin{remark} Note that if $R=\Q[t]$ then and $\I=\langle t^k\rangle $ for some $k\geq 1$ then  

$$
\frac{\Delta(T)}{\Lambda(T)} \equiv \Delta(T) \text{ and } \bfU(T)\equiv\frac{\partial \ww(T)}{\partial T} \mod  q(T) \mod \I^2,
$$
thus our second construction is equivalent to the one in Definition \ref{constr1}. However, when our coefficient ring $R$ is a field $\K\subset \C$,  the polynomials in $\bfU(T)$ are  not the partial derivatives of the ones in $\ww(T)$, so we get a different iteration in Definition \ref{constr3} from the one in Definition \ref{constr1}.
\end{remark} 

The next proposition shows that $\bar{\bfV}(T)$ and $ \bar{Q}(T)$ from Definition \ref{constr3} are the  Newton iterates for the function $\Phi$. 

\begin{proposition}\label{mainprop}
Let $\bfF$, u, $q(T)$, $\vv(T)$ and $\Phi$ be such that  Assumption \ref{assu3} holds. Then 
$\Lambda(T)$ defined in (\ref{Lambda}) is invertible modulo $q(T)$,   and thus $\bar{\bfV}(T)$ and $ \bar{Q}(T)$ are well  defined in Definition \ref{constr3}. Furthermore 
{\small \begin{equation}
\left[\begin{array}{c}\bar{\bfV}(T)\\
 \bar{Q}(T)-T^d\end{array}\right] = \left[\begin{array}{c}\vv(T)\\q(T)-T^d\end{array}\right]- J_\Phi^{-1}\cdot \left[\begin{array}{c}\bfF(\vv(T))\\\sum_{i=1}^n \lambda_i v_i(T)-T\end{array}\right] \mod q(T),
\end{equation}}
where the vector on the right hand side is $\Phi(\vv(T), q(T)-T^d)$. Finally, we also have that 
$
\sum_{i=1}^n\lambda_i\bar{V}_i(T) =T.$
\end{proposition}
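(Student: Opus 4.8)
The plan is to verify directly that the block formula in Proposition~\ref{mainprop} is exactly the Newton update $x \mapsto x - J_\Phi^{-1}\Phi(x)$ for the map $\Phi$ at the point $(\vv(T),\,q(T)-T^d)$, by solving the linear system $J_\Phi \cdot \delta = \Phi(\vv(T), q(T)-T^d)$ explicitly in the algebra $\left(R[T]/\langle q(T)\rangle\right)^{n+1}$ and reading off the result. First I would establish that $\Lambda(T)$ defined in~(\ref{Lambda}) is invertible modulo $q(T)$; this is forced by hypothesis~6 of Assumption~\ref{assu3}, since $J_\Phi$ being invertible modulo $q(T)$ means its "Schur complement" relative to the upper-left $J_\bfF(\vv(T))$ block is invertible, and a short block-elimination computation using the formula~(\ref{JacobPhi}) identifies that Schur complement (a $1\times 1$ block over $R[T]/\langle q(T)\rangle$) with precisely $\sum_i \lambda_i U_i(T) = \Lambda(T)$, where $\bfU(T)$ from~(\ref{U}) is $\frac{\partial\vv}{\partial T} - J_\bfF(\vv(T))^{-1}\frac{\partial\bfr}{\partial T}$, i.e. $-J_\bfF(\vv(T))^{-1}$ applied to the last column $(-m_1,\ldots,-m_n)^{t}$ shifted by $\frac{\partial\vv}{\partial T}$. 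One must be careful here that $\frac{\partial q}{\partial T}\equiv \sum_j \frac{\partial r_j}{\partial T}\cdot(\text{stuff})$ relations are used correctly; the relevant identity is obtained by differentiating~(\ref{rm}) with respect to $T$ and reducing mod $q(T)$, which gives $\frac{\partial r_i}{\partial T} \equiv \frac{\partial F_i(\vv(T))}{\partial T} - m_i(T)\frac{\partial q(T)}{\partial T} \pmod{q(T)}$, so the last column $-m_i(T)$ of $J_\Phi$ interacts with $\frac{\partial q}{\partial T}$ exactly as needed.

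Next I would carry out the block back-substitution. Write the unknown increment as $\delta = (\boldsymbol{\eta}(T), \kappa(T))$ with $\boldsymbol{\eta}$ an $n$-vector and $\kappa$ a scalar, all mod $q(T)$, and the right-hand side as $(\bfF(\vv(T))\bmod q(T),\ \sum_i\lambda_i v_i(T)-T) = (\bfr(T), \Delta(T) + (\text{the } T^d \text{ bookkeeping is absorbed since } \Delta q \text{ only enters the last column}))$ — more precisely the bottom entry is $\Delta(T)$ as defined in~(\ref{ww3}). From the top $n$ rows, $J_\bfF(\vv(T))\boldsymbol{\eta}(T) - \kappa(T)(m_1,\ldots,m_n)^t \equiv \bfr(T)$, so $\boldsymbol{\eta}(T) \equiv J_\bfF(\vv(T))^{-1}\bfr(T) + \kappa(T)J_\bfF(\vv(T))^{-1}(m_1,\ldots,m_n)^t \pmod{q(T)}$. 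Substituting into the last row $\sum_i\lambda_i\eta_i(T) = \Delta(T)$ and using that $\sum_i\lambda_i$ applied to $J_\bfF(\vv(T))^{-1}\bfr(T)$ equals $-\Delta(T) + \sum_i \lambda_i \frac{\partial v_i}{\partial T}\cdot 0 + \ldots$ — here I should instead track it through $\bfw(T)$: since $\bfw(T) = \vv(T) - J_\bfF(\vv(T))^{-1}\bfr(T)$ and $\Delta(T) = \sum_i\lambda_i w_i(T) - T = \sum_i \lambda_i v_i(T) - T - \sum_i\lambda_i(J_\bfF(\vv(T))^{-1}\bfr(T))_i = -\sum_i\lambda_i(J_\bfF(\vv(T))^{-1}\bfr(T))_i$ using hypothesis~4. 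This collapses the last-row equation to $-\Delta(T) + \kappa(T)\Lambda(T) \equiv \Delta(T)$... — at which point I would reconcile signs carefully to land on $\kappa(T) = \Delta(T)/\Lambda(T)$ and then back-substitute to get $\boldsymbol{\eta}(T) = J_\bfF(\vv(T))^{-1}\bfr(T) + \frac{\Delta(T)}{\Lambda(T)}(\frac{\partial\vv}{\partial T} - \bfU(T))$, whence $\vv(T) - \boldsymbol{\eta}(T) = \bfw(T) - \frac{\Delta(T)}{\Lambda(T)}(\frac{\partial\vv}{\partial T}-\bfU(T))$; matching this against~(\ref{bVV}) requires identifying $\frac{\partial\vv}{\partial T}-\bfU(T)$ appropriately. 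For the last coordinate, $\kappa(T) = \Delta(T)/\Lambda(T)$ is the increment to $\Delta q(T) = q(T)-T^d$; to match~(\ref{bQ}) which subtracts $\frac{\Delta(T)}{\Lambda(T)}\frac{\partial q(T)}{\partial T}$, I would use the last-column identity relating $m_i(T)$, $\frac{\partial r_i}{\partial T}$ and $\frac{\partial q}{\partial T}$ established above, so that the increment to the scalar coordinate reads out as $\frac{\Delta(T)}{\Lambda(T)}\frac{\partial q}{\partial T}\bmod q(T)$ rather than just $\frac{\Delta(T)}{\Lambda(T)}$. Finally, the claim $\sum_i\lambda_i\bar V_i(T) = T$ follows immediately: $\bar{\bfV}(T)$ is the image of $(\vv(T),\Delta q(T))$ under the Newton map for $\Phi$, and the last component of $\Phi$ is exactly $\sum_i\lambda_i v_i(T)-T$, so applying one Newton step to the affine-linear-in-the-$v$'s last coordinate sets it to zero exactly; alternatively, $\sum_i\lambda_i\bar V_i(T) = \sum_i\lambda_i w_i(T) - \frac{\Delta(T)}{\Lambda(T)}\sum_i\lambda_i(\frac{\partial v_i}{\partial T}-U_i(T)) = (T+\Delta(T)) - \frac{\Delta(T)}{\Lambda(T)}(\frac{\partial}{\partial T}(\sum\lambda_iv_i) - \Lambda(T)) = (T+\Delta(T)) - \frac{\Delta(T)}{\Lambda(T)}(1-\Lambda(T))$... — I would simplify this last expression, using $\frac{\partial}{\partial T}(\sum_i\lambda_i v_i) = \frac{\partial T}{\partial T}=1$ from hypothesis~4, to get exactly $T$.

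The main obstacle I anticipate is purely bookkeeping rather than conceptual: the interplay between differentiation and reduction modulo $q(T)$ (flagged in the Remark after Definition~\ref{constr3}) means one cannot naively write $\frac{\partial\bfw}{\partial T}$ and must instead consistently carry $\bfr(T)$, $\frac{\partial\bfr}{\partial T}$, $m_i(T)$, and $\frac{\partial q}{\partial T}$ through every step, getting the signs and the placement of $T^d$-corrections right. In particular, identifying the Schur complement with $\Lambda(T)$ and verifying the last-column / $\frac{\partial q}{\partial T}$ identity are the two places where an error would propagate. A clean way to organize the computation, which I would adopt, is to never expand anything: do the $2\times 2$ block solve symbolically over the commutative ring $A := R[T]/\langle q(T)\rangle$ (with $J_\bfF(\vv(T))\in \mathrm{GL}_n(A)$ and the Schur complement $\Lambda(T)\in A^\times$), treating $(m_1,\ldots,m_n)^t$ as an abstract column of $A$, and only at the very end substitute the definitions~(\ref{U})--(\ref{bQ}) and invoke the differentiated version of~(\ref{rm}) to rewrite $\bfU(T)$ and $\frac{\partial q}{\partial T}$ in the claimed closed form. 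Invertibility of $\Lambda(T)$ then comes for free from the standard fact that a block matrix over a commutative ring with invertible upper-left block is invertible iff its Schur complement is, combined with hypothesis~6.
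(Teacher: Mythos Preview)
Your overall strategy---block elimination over $A:=R[T]/\langle q(T)\rangle$ using the invertibility of $J_\bfF(\vv(T))$, identifying the Schur complement, and reading off the increment---is exactly what the paper does. The paper organizes it slightly differently: rather than forward-solving and back-substituting, it first derives the kernel relation
\[
J_\bfF(\vv(T))\,\bfU(T)\;\equiv\;\tfrac{\partial q}{\partial T}\,\bfm(T)\pmod{q(T)}
\]
by differentiating~(\ref{rm}), and then simply \emph{verifies} that $(\bfW+\tfrac{\Delta}{\Lambda}\bfU,\ \tfrac{\Delta}{\Lambda}\tfrac{\partial q}{\partial T})$ is annihilated to $(\bfF(\vv),\,0)$ by $J_\Phi$. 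Either route works.

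However, your sketch contains three concrete misidentifications that are not mere sign issues and would derail the computation if left as stated:
\begin{itemize}
\item The last entry of the right-hand side $\Phi(\vv,\Delta q)$ is $\sum_i\lambda_i v_i(T)-T$, which is $0$ by item~4 of Assumption~\ref{assu3}. It is \emph{not} $\Delta(T)$; that quantity involves $\bfw$, not $\vv$.
\item The Schur complement of $J_\Phi$ with respect to the $J_\bfF(\vv(T))$ block is $[\lambda_1\cdots\lambda_n]\,J_\bfF(\vv(T))^{-1}\bfm(T)$, which equals $\Lambda(T)\big/\tfrac{\partial q}{\partial T}$, not $\Lambda(T)$ itself. (The paper records this as $\Lambda\cdot\det J_\bfF=\tfrac{\partial q}{\partial T}\cdot\det J_\Phi$.) Invertibility of $\Lambda$ then needs item~3 of Assumption~\ref{assu3} in addition to item~6.
\item Consequently the scalar increment is $\kappa(T)=\dfrac{\Delta(T)}{\Lambda(T)}\,\dfrac{\partial q}{\partial T}$, not $\Delta(T)/\Lambda(T)$; this is precisely why~(\ref{bQ}) carries the factor $\tfrac{\partial q}{\partial T}$. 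Back-substituting this $\kappa$ and using $\tfrac{\partial q}{\partial T}\,J_\bfF^{-1}\bfm=\bfU$ (from the differentiated~(\ref{rm})) gives $\boldsymbol{\eta}=\bfW+\tfrac{\Delta}{\Lambda}\bfU$ directly, so $\vv-\boldsymbol{\eta}=\bfw-\tfrac{\Delta}{\Lambda}\bfU$, matching~(\ref{bVV}) without the stray $\tfrac{\partial\vv}{\partial T}-\bfU$ term you arrived at.
\end{itemize}
With these three corrections your block-solve goes through cleanly, and the final claim is then the one-line computation $\sum_i\lambda_i\bar V_i=(T+\Delta)-\tfrac{\Delta}{\Lambda}\cdot\Lambda=T$.
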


\begin{proof}
Taking derivatives by $T$ of both sides of the equations 
$$F_i(\vv(T))-m_i(T)q(T) = r_i(T)$$
 for $i=1, \dots n$, we get that
\begin{equation*}
J_\bfF(\vv(T)) \frac{\partial \vv(T)}{\partial T} -  \frac{\partial q(T)}{\partial T}  \bfm(T) \equiv   \frac{\partial \bfr(T)}{\partial T} \mod q(T),
\end{equation*}
or equivalently
$$\begin{array}{|ccc|c|c}
\cline{1-4}
      & & & -m_1(T)&\\
      & J_\bfF(\vv(T)) & &\vdots&\\
      & & & -m_n(T)&\\
      \cline{1-4}
\cline{1-4} \multicolumn{2}{c}{}
\end{array}\cdot \left[\begin{array}{c}\frac{\partial v_1(T)}{\partial T}\\\vdots \\\frac{\partial v_n(T)}{\partial T} \\\frac{\partial q(T)}{\partial T}\end{array} \right]\equiv 
\left[\begin{array}{c}\frac{\partial r_1(T)}{\partial T}\\\vdots \\\frac{\partial r_n(T)}{\partial T}\end{array}\right]\mod q(T).
$$
From the definition 
$$\bfU(T):=  \frac{\partial \vv(T)}{\partial T} -  \left(J_\bfF (\vv(T))^{-1}\frac{\partial \bfr(T)}{\partial T} \mod q(T)\right)
$$ 
in (\ref{U}) we get that
\begin{equation}\label{kernel}
\begin{array}{|ccc|c|c}
\cline{1-4}
      & & & -m_1(T)&\\
      & J_\bfF(\vv(T)) & &\vdots&\\
      & & & -m_n(T)&\\
      \cline{1-4}
\cline{1-4} \multicolumn{2}{c}{}
\end{array}\cdot \left[\begin{array}{c}U_1(T)\\\vdots \\U_n(T)\\\frac{\partial q(T)}{\partial T}\end{array} \right]=
\left[\begin{array}{c}0\\\vdots \\0\end{array}\right]\mod q(T).
\end{equation}
From this and from the definition 
$$
 \Lambda(T)= \sum_{i=1}^n \lambda_i U_i(T) 
 $$
 it is easy to see that 
 $$
 \Lambda(T) = \frac{\partial q(T)}{\partial T} \cdot \left[\lambda_1 \cdots \lambda_n\right] J_\bfF^{-1}(\vv(T)) \left[\begin{array}{c}m_1(T)\\\vdots \\m_n(T)\end{array}\right].
 $$ 
Then using Schur complements we get that 
 $$
\Lambda(T) \cdot  \det J_\bfF(\vv(T)) = \frac{\partial q(T)}{\partial T}\cdot  \det J_\Phi,
 $$
 and since both $\frac{\partial q(T)}{\partial T} $ and $\det (J_\Phi)$ are invertible modulo $q(T)$ by our assumptions, so is $\Lambda(T)$ as was claimed.\\ 
To prove the second claim, consider  $\ww(T)=\vv(T)- J_\bfF(\vv(T))^{-1}\bfF(\vv(T))$ defined in (\ref{ww3}), and define 
\begin{equation}\label{WW}
\bfW(T) := J_\bfF(\vv(T))^{-1}\bfF(\vv(T)).
\end{equation} 
Using that $\sum_{i=1}^n \lambda_i v_i(T)=T $ we get that 
$$
\Delta(T) =\sum_{i=1}^n \lambda_i w_i(T) -T= -\sum_{i=1}^n \lambda_i W_i(T) 
$$ 
and thus  
\begin{equation}\label{uvalue}
\sum_{i=1}^n \lambda_i W_i(T) + \frac{\Delta(T)}{\Lambda(T)} \cdot \sum_{i=1}^n \lambda_i U_i(T)\equiv 0\equiv \sum_{i=1}^n \lambda_i v_i(T)-T \mod q(T).
\end{equation}
Thus from (\ref{kernel}), (\ref{WW}) and (\ref{uvalue}) we have that modulo $q(T)$
{\small 
$$\begin{array}{|ccc|c|c}
\cline{1-4}
      & & & -m_1(T)&\\
      & J_\bfF(\vv(T)) & &\vdots& \\
      & & & -m_n(T)&\\
      \cline{1-4}
   \lambda_1& \cdots &\lambda_n & 0 &   \\
\cline{1-4} \multicolumn{2}{c}{}
\end{array}
\cdot \left[\begin{array}{c} W_1(T) + \frac{\Delta(T)}{\Lambda(T)}U_1(T)\\\vdots \\W_n(T) + \frac{\Delta(T)}{\Lambda(T)}U_n(T)\\\frac{\Delta(T)}{\Lambda(T)}\cdot \frac{\partial q(T)}{\partial T}\end{array} \right]=
\left[\begin{array}{c}F_1(\vv(T))\\\vdots \\F_n(\vv(T)) \\ \sum_{i=1}^n \lambda_i v_i(T)-T  \end{array}\right],
$$
}
or equivalently 
$$
\left[\begin{array}{c} W_1(T) + \frac{\Delta(T)}{\Lambda(T)}U_1(T)\\\vdots \\W_n(T) + \frac{\Delta(T)}{\Lambda(T)}U_n(T)\\\frac{\Delta(T)}{\Lambda(T)}\cdot \frac{\partial q(T)}{\partial T}\end{array} \right] = J_\Phi^{-1} \cdot \left[\begin{array}{c}F_1(\vv(T))\\\vdots \\F_n(\vv(T)) \\ \sum_{i=1}^n \lambda_i v_i(T)-T  \end{array}\right].
$$
This, together with the definitions of $\bar{\bfV}(T)$ in (\ref{bVV}) and  $\bar{Q}(T)$ in (\ref{bQ}) 
 proves the second claim. The third claim is immediate  from  (\ref{uvalue}), as  
 $$
 \sum_{i=1}^n \lambda_i \bar{V}_i=  \sum_{i=1}^n \lambda_i v_i(T) -  \sum_{i=1}^n \lambda_i \left( W_i(T) + \frac{\Delta(T)}{\Lambda(T)}U_i(T)\right) = T-0.
 $$
 \end{proof}

\begin{corollary} Let $R=\K\subset\C$  be a field  equipped with the usual absolute value, and consider the Euclidean norm on the coefficient vectors of polynomials over~$\C$. 
Then the  iteration defined by Definition \ref{constr3} is locally quadratically convergent to an exact RUR of a component of $\langle \bfF \rangle $ over an algebraic extension of $\K$, as long as  Assumption \ref{assu3} is satisfied in each iteration.
\end{corollary}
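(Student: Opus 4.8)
The plan is to deduce the corollary from Proposition~\ref{mainprop}, which has already identified $\bar{\bfV}(T)$ and $\bar{Q}(T)$ as exactly the image of $(\vv(T),q(T)-T^d)$ under the standard Newton operator
$$
N_\Phi(\bfx) := \bfx - J_\Phi(\bfx)^{-1}\Phi(\bfx)
$$
associated with the map $\Phi$ of~(\ref{Phi}), viewed as a polynomial map on the finite-dimensional $\K$-vector space $\left(R[T]/\langle q(T)\rangle\right)^{n+1}\cong R^{(n+1)d}$ with $R=\K\subset\C$ and the Euclidean norm on coefficient vectors. So the corollary reduces to the classical fact that Newton's iteration for a polynomial (hence $C^\infty$, indeed analytic) map $\Phi:\C^{N}\to\C^{N}$ is locally quadratically convergent near any zero $\bfx^{\ast}$ at which $J_\Phi(\bfx^{\ast})$ is invertible. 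This is Smale's $\gamma$/Newton theorem, or more elementarily the statement proved by a Taylor expansion: there is a neighborhood $U$ of $\bfx^{\ast}$ and a constant $C$ with $\norm{N_\Phi(\bfx)-\bfx^{\ast}} \le C\norm{\bfx-\bfx^{\ast}}^2$ for all $\bfx\in U$, so that once an iterate enters the ball where $C\norm{\bfx-\bfx^{\ast}}<1$ the sequence converges to $\bfx^{\ast}$ quadratically.

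First I would make precise that, under Assumption~\ref{assu3}, the quantities in Definition~\ref{constr3} are exactly the coordinates of $N_\Phi$ applied to the input: this is the content of the displayed identity in Proposition~\ref{mainprop}, which also guarantees $\Lambda(T)$ is invertible modulo $q(T)$ so that the iteration is well defined, and it records that the affine constraint $\sum_i\lambda_i\bar V_i(T)=T$ is preserved exactly. Next I would identify the limit: an exact RUR of a component of $\langle\bfF\rangle$ over an algebraic extension of $\K$ is precisely a zero $\bfx^{\ast}=(\vv^{\ast}(T),q^{\ast}(T)-T^d)$ of $\Phi$ at which Assumption~\ref{assu3} holds (in particular $J_\Phi(\bfx^{\ast})$ is invertible); indeed, $\Phi(\bfx^{\ast})=0$ says exactly that $F_i(\vv^{\ast})\equiv 0\bmod q^{\ast}$ and $\sum\lambda_i v_i^{\ast}=T$, the RUR conditions, while $\gcd(q^{\ast},(q^{\ast})')=1$ follows from invertibility of $\partial q/\partial T$ modulo $q$. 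Then the hypothesis ``Assumption~\ref{assu3} is satisfied in each iteration'' ensures the iterates stay in the domain where $N_\Phi$ is defined and the $C^\infty$ Newton estimate applies, and I would invoke the standard local convergence theorem to conclude quadratic convergence to $\bfx^{\ast}$, i.e.\ to an exact RUR.

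The routine but slightly delicate point — and the one I would spend a sentence on rather than gloss over — is the passage between the two ways of regarding $\Phi$: as a map on $\left(\K[T]/\langle q(T)\rangle\right)^{n+1}$ with polynomial-valued Jacobian~(\ref{JacobPhi}), and as a genuine map $\C^{(n+1)d}\to\C^{(n+1)d}$ whose Jacobian is the block matrix obtained by replacing each polynomial entry by its $d\times d$ matrix of modular multiplication (the Remark after Lemma~\ref{PhiLemma}). One must check that invertibility of $J_\Phi$ ``modulo $q(T)$'' in Assumption~\ref{assu3}(6) is equivalent to invertibility of that $(n+1)d\times(n+1)d$ numerical matrix — this holds because, $q$ being separable with distinct roots, $\K[T]/\langle q(T)\rangle$ is a product of fields and a polynomial is a unit iff the corresponding multiplication matrix is invertible — and that the Newton step computed with polynomial arithmetic modulo $q(T)$ in Definition~\ref{constr3} coincides with the numerical Newton step for the vector map, which is exactly what Proposition~\ref{mainprop} asserts. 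With that identification in hand there is nothing left but to cite the classical theorem.

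\medskip
\noindent\emph{Proof.} By Proposition~\ref{mainprop}, whenever Assumption~\ref{assu3} holds the pair $(\bar{\bfV}(T),\bar Q(T))$ of Definition~\ref{constr3} equals $N_\Phi(\vv(T),q(T)-T^d)$, where $N_\Phi(\bfx)=\bfx-J_\Phi(\bfx)^{-1}\Phi(\bfx)$ is the Newton operator of the polynomial map $\Phi$ in~(\ref{Phi}), regarded as a map on $\C^{(n+1)d}$; here we use the Remark after Lemma~\ref{PhiLemma} to pass from the polynomial Jacobian~(\ref{JacobPhi}) to the numerical Jacobian on $\C^{(n+1)d}$, noting that invertibility modulo $q(T)$ is equivalent to invertibility of the associated multiplication matrix because $\gcd(q,q')=1$ makes $\C[T]/\langle q(T)\rangle$ a product of fields. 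An exact RUR of a component of $\langle\bfF\rangle$ over an algebraic extension of $\K$ is a point $\bfx^{\ast}=(\vv^{\ast}(T),q^{\ast}(T)-T^d)$ with $\Phi(\bfx^{\ast})=0$, and the running hypothesis that Assumption~\ref{assu3} holds at each iterate, together with continuity of $J_\Phi$, shows in particular that $J_\Phi(\bfx^{\ast})$ is invertible. Since $\Phi$ is a polynomial (hence analytic) map on $\C^{(n+1)d}$, the classical local convergence theorem for Newton's method at a simple zero applies: there exist a neighborhood $U\ni\bfx^{\ast}$ and $C>0$ with $\norm{N_\Phi(\bfx)-\bfx^{\ast}}\le C\norm{\bfx-\bfx^{\ast}}^2$ for $\bfx\in U$. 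Hence the iteration of Definition~\ref{constr3}, as long as it remains in $U$ (which the standing assumption permits), converges quadratically to $\bfx^{\ast}$, an exact RUR. Finally, Proposition~\ref{mainprop} guarantees $\sum_{i=1}^n\lambda_i\bar V_i(T)=T$ holds exactly at every step, so the limit is an RUR with this normalization. \qed
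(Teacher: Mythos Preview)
The paper gives no proof of this corollary; it is stated as an immediate consequence of Proposition~\ref{mainprop} together with the classical local quadratic convergence of Newton's method at a simple zero. Your approach is exactly the intended one, and your discussion of the passage between the polynomial-valued Jacobian~(\ref{JacobPhi}) and the $(n+1)d\times(n+1)d$ numerical Jacobian (via the Remark after Lemma~\ref{PhiLemma}) is a useful clarification that the paper leaves implicit.

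One small logical slip worth fixing: you assert that ``the running hypothesis that Assumption~\ref{assu3} holds at each iterate, together with continuity of $J_\Phi$, shows in particular that $J_\Phi(\bfx^{\ast})$ is invertible.'' That inference is not valid as stated---invertibility along a sequence converging to $\bfx^{\ast}$ does not by itself force invertibility at $\bfx^{\ast}$. The correct reading of the corollary (and the standard hypothesis for local Newton convergence) is that one starts near a zero $\bfx^{\ast}$ of $\Phi$ at which $J_\Phi(\bfx^{\ast})$ is already invertible; the clause ``as long as Assumption~\ref{assu3} is satisfied in each iteration'' is there only to guarantee that the iterates of Definition~\ref{constr3} remain well defined. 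With that phrasing corrected, your argument is complete and matches the paper's intent.
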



\section{Example: A cubic-centered 12-bar linkage}


To illustrate the application of these techniques, we compute 
an RUR for a rational component of a square system to prove that 
it solves an overdetermined system of equations arising from 
a 12-bar spherical linkage.  The overdetermined polynomial system $\bfG$
consists of $17$ quadratic and $2$ linear polynomials in $18$ variables 
for the linkage first described in \cite{WLE07}, which 
is presented in \cite[Fig.~3]{WHS11}.
The trivial rotation of the cube is removed by placing the center of the cube
at the origin and fixing two adjacent vertices, say $P_7$ and $P_8$,
at $(-1,1,-1)$ and $(-1,-1,-1)$, respectively.  The $18$ variables
are the coordinates of the six remaining vertices of the cube, say $P_1,\dots,P_6$
with $P_i = (P_{ix},P_{iy},P_{iz})$. 
The $17$ quadratic conditions force these free vertices to maintain
their relative distances:
$$\begin{array}{ll}
\|P_i - P_j\|^2 - 4 = 0,
&(i,j)\in\left\{\begin{array}{c} (1,2),(3,4),(5,6),(1,5),(2,6),(3,7),\\
(4,8),(1,3),(2,4),(5,7),(6,8)\end{array}\right\} \\ \\
\|P_i\|^2 - 3 = 0, &i = 1,\dots,6.
\end{array}$$

The irreducible components of these $17$ quadratic polynomials was 
first described in \cite[Table~1]{H13}.  This 
decomposition shows that there is a unique irreducible surface $S$
of degree $16$, which is the current focus of study.  
In particular, the rational component is the $16$ points
arising from the intersection of $S$ with the codimension two
linear space defined by:
$$\begin{array}{l}
P_{3x} + P_{4x} + P_{2z} = 0, \\
P_{5x} - P_{6x} + P_{3y} + P_{3z} + 1 = 0.\\
\end{array}$$

In order to compute an RUR for this rational component, we consider
the square polynomial system $\bfF$ consisting of these two linear equations
and $16$ quadratic equations obtained by adding the last
quadratic above, i.e., \mbox{$\|P_6\|^2 - 3$}, to the other sixteen.
Starting with a witness set for $S$, we used {\tt Bertini} \cite{BHSW06} 
to compute numerical approximations of the $16$ points of interest.  
From these points, we observe that the 
variable $P_{6z}$ is distinct, so we take the primitive element
$u(P_1,\dots,P_6) = P_{6z} = T.$
Next, we produce an initial guess for the monic univariate 
polynomial $q(T)$ of degree $16$.  Since $q(T)$ naturally has
small integer coefficients, this polynomial was computed exactly.
We produced an initial guess for the univariate 
polynomials ${\bf v}(T)$ of degree at most $15$ via Lagrange interpolation
using the computed numerical approximations. These are polynomials with rational coefficients having at most 5 digit
numerators and denominators. 
 These initial guesses are:

{\tiny $$
\begin{array}{lcl}
q(T) &=& T^{16} + 20 T^{15} + 210 T^{14} + 1230 T^{13} + 4212 T^{12} + 4677 T^{11} \\
& & -~6886 T^{10} - 21389 T^9 + 58242 T^8 - 45269 T^7 - 6118 T^6 \\
& & +~58968 T^5 - 103014 T^4 + 119847 T^3 - 91281 T^2 + 40466 T - 8291 \\
v_{1x}(T) &=& -1 \\
v_{2x}(T) &=& -1 \\
v_{3x}(T) &=& -1/1112 T^{15} - 1/57 T^{14} - 13/72 T^{13} - 139/138 T^{12} - 167/54 T^{11} \\
& &  -~67/60 T^{10} + 1616/117 T^{9} + 3173/120 T^{8} - 3922/63 T^{7} \\
& & +~1165/39 T^{6} + 2909/123 T^{5} - 1709/30 T^{4} + 9965/109 T^{3} \\
& & -~2366/23 T^{2} + 63032/1027 T - 1466/99 \\
v_{4x}(T) &=& 1/1112 T^{15} + 1/57 T^{14} + 13/72 T^{13} + 139/138 T^{12} + 167/54 T^{11}\\
& & +~67/60 T^{10} - 1616/117 T^{9} - 3173/120 T^{8} + 3922/63 T^{7} \\
& & -~1165/39 T^{6} - 2909/123 T^{5} + 1709/30 T^{4} - 9965/109 T^{3} \\
& & +~2366/23 T^{2} - 63032/1027 T + 1565/99  \\
v_{5x}(T) &=& -1/10090 T^{15} -1/506 T^{14} - 1/49 T^{13} - 3/26 T^{12} - 33/94 T^{11} \\
& & -~7/132 T^{10} + 187/83 T^{9} + 538/107 T^{8} - 398/77 T^{7} \\
& & -~1601/587 T^{6} + 2223/502 T^{5} - 129/113 T^{4} + 544/63 T^{3} \\
& & -~917/150 T^{2} - 233/97 T + 23/36  \\
v_{6x}(T) &=& -1/5519 T^{15} - 1/265 T^{14} - 3/73 T^{13} - 31/121 T^{12} - 33/34 T^{11} \\
& & -~129/79 T^{10} - 37/517 T^{9} + 233/61 T^{8} - 2017/270 T^{7} \\
& & +~373/177 T^{6} + 155/53 T^{5} - 2314/271 T^{4} + 705/59 T^{3} \\
& & -~4487/358 T^{2} + 963/158 T - 59/47 \\
v_{1y}(T) &=& 1 \\
v_{2y}(T) &=& -1 \\
v_{3y}(T) &=& -1/2038 T^{15} - 1/103 T^{14} - 17/169 T^{13} - 31/54 T^{12} - 245/132 T^{11} \\
& & -~120/89 T^{10} + 224/39 T^{9} + 757/60 T^{8} - 581/18 T^{7} \\
& & +~3193/184 T^{6} + 1661/150 T^{5} - 21047/654 T^{4} + 7721/163 T^{3} \\
& & -~7541/138 T^{2} + 2725/78 T - 785/94  \\
v_{4y}(T) &=& -1/2357 T^{15} - 1/108 T^{14} - 33/314 T^{13} - 39/56 T^{12} - 187/65 T^{11} \\
& & -~581/95 T^{10} - 2401/600 T^{9} + 930/101 T^{8} - 775/113 T^{7} \\
& & -~445/44 T^{6} + 1237/142 T^{5} - 1487/168 T^{4} + 1817/150 T^{3} \\
& & -~462/167 T^{2} - 1489/248 T + 494/135 \\
v_{5y}(T) &=& 1/12979 T^{15} + 1/694 T^{14} + 1/70 T^{13} + 11/148 T^{12} + 37/177 T^{11} \\
& & +~1/107 T^{10} - 213/365 T^{9} + 118/195 T^{8} + 1551/163 T^{7} \\
& & -~580/67 T^{6} - 531/166 T^{5} + 1893/211 T^{4} - 463/51 T^{3} \\
& & +~1090/61 T^{2} - 2787/212 T + 258/95 \\
v_{6y}(T) &=& 1/5519 T^{15} + 1/265 T^{14} + 3/73 T^{13} + 31/121 T^{12} + 33/34 T^{11} \\
& & +~129/79 T^{10} + 37/517 T^{9} - 233/61 T^{8} + 2017/270 T^{7} \\
& & -~373/177 T^{6} - 155/53 T^{5} + 2314/271 T^{4} - 705/59 T^{3} \\
& & +~4487/358 T^{2} - 1121/158 T + 12/47\\
v_{1z}(T) &=& -1 \\
v_{2z}(T) &=& -1 \\
v_{3z}(T) &=& 1/2448 T^{15} + 1/126 T^{14} + 2/25 T^{13} + 13/30 T^{12} + 183/148 T^{11} \\
& & -~22/95 T^{10} - 1299/161 T^{9} - 553/40 T^{8} + 1259/42 T^{7} \\
& & -~338/27 T^{6} - 3182/253 T^{5} + 1958/79 T^{4} - 1630/37 T^{3} \\
& & +~4292/89 T^{2} - 1084/41 T + 1206/221  \\
v_{4z}(T) &=& -1/2106 T^{15} - 1/119 T^{14} - 4/53 T^{13} - 23/74 T^{12} - 25/116 T^{11} \\
& & +~5654/1131 T^{10} + 2868/161 T^{9} + 1327/77 T^{8} - 7423/134 T^{7} \\
& & +~2759/69 T^{6} + 1718/115 T^{5} - 9575/199 T^{4} + 5393/68 T^{3} \\
& & -~12613/126 T^{2} + 6401/95 T - 1883/92 \\
v_{5z}(T) &=& 1/5677 T^{15} + 1/292 T^{14} + 7/202 T^{13} + 26/137 T^{12} + 219/391 T^{11} \\
& & +~22/353 T^{10} - 139/49 T^{9} - 774/175 T^{8} + 279/19 T^{7} \\
& & -~587/99 T^{6} - 450/59 T^{5} + 536/53 T^{4} - 3029/171 T^{3} \\
& & +~1343/56 T^{2} - 462/43 T + 14/13  \\
v_{6z}(T) &=& T
\end{array}
$$
}

We refined the approximate RUR using Algorithm \ref{algroots}, and in 3 iterations (in roughly 1 second) we found the exact RUR.  In the  ${\bf v}(T)$ polynomials of the exact RUR, 
the numerators and denominators of the coefficients have at most 28 digits, namely:

{\tiny
$$
\begin{array}{lcl}
\alpha &=& 1/3204471773221369279790658525 \\
v_{3x}(T) &=& \alpha (-2881129493593630865610329 T^{15}-56469358709164889119641644T^{14}\\
& & -~578442048083015317390422659 T^{13}-3227775460749576025678391459 T^{12}\\
& & -~9909894946587188228883719582 T^{11}-3578358749346900975113448620 T^{10}\\
& & +~44260151084205755500190960589 T^{9}+84731577601881128711018565420 T^{8}\\
& & -~199491165378780802464515305188 T^{7}+95723229838339681423971314578 T^{6}\\
& & +~75787130941751596487093105995 T^{5}-182548470032615020420523374937 T^{4}\\
& & +~292959497003500175534452099849 T^{3}-329643042476857281605069314889 T^{2}\\
& & +~196674125364601362085119810025 T-47452126308845628915580789974) \\
v_{4x}(T) &=& \alpha(2881129493593630865610329 T^{15}+56469358709164889119641644 T^{14}\\
& & +~578442048083015317390422659 T^{13}+3227775460749576025678391459 T^{12}\\
& & +~9909894946587188228883719582 T^{11}+3578358749346900975113448620 T^{10}\\
& & -~44260151084205755500190960589 T^{9}-84731577601881128711018565420 T^{8}\\
& & +~199491165378780802464515305188 T^{7}-95723229838339681423971314578 T^{6}\\
& & -~75787130941751596487093105995 T^{5}+182548470032615020420523374937 T^{4}\\
& & -~292959497003500175534452099849 T^{3}+329643042476857281605069314889 T^{2}\\
& & -~196674125364601362085119810025 T+50656598082066998195371448499) \\
v_{5x}(T) &=& \alpha/47(-14926508985814692725660295 T^{15}-297773391799482191643772465 T^{14}\\
& & -~3081610308415501202687193085 T^{13}-17389506895894299310438310140 T^{12}\\
& & -~52870530884177916409158871660 T^{11}-7985554397153232406118329735 T^{10}\\
& & +~339337071563230521110991064610 T^{9}+757269429739682916272675120805 T^{8}\\
& & -~778491818056744498922090610960 T^{7}-410778307074659870607871725630 T^{6}\\
& & +~666945252290852928200793739220 T^{5}-171936037274621562789031991275 T^{4}\\
& & +~1300517229936781609262733002915 T^{3}-920730755436417716152608059995 T^{2}\\
& & -~361767415638579519536847568045 T+96208631064260278520898204560)\\
v_{6x}(T) &=& \alpha/47(-27287280868886188971151890 T^{15}-567883140744448436140744235 T^{14}\\
& &-~6190772706188356762139486930 T^{13}-38580721830051962511624944735 T^{12}\\
& &-~146182732583672120750472401420 T^{11}-245939987210083103698877745635 T^{10}\\
& &-~10778669490043487418881186945 T^{9}+575272052935709136436375750005 T^{8}\\
& &-~1125114039605827435077357466500 T^{7}+317389390835062387309791675960 T^{6}\\
& &+~440466388670191221545685003445 T^{5}-1286022849359951198330786683160 T^{4}\\
& &+~1799659504525177884513418973560 T^{3}-1887676569404811089807938408490 T^{2}\\
& &+~917961856537764412658257692880 T-189055429157249714249696125460)\\

v_{3y}(T) &=& \alpha(-1572062320020980286480607 T^{15}-31108187322082085458724777 T^{14}\\
& &-~322297219762495483795767647 T^{13}-1839326187121146132000755672 T^{12}\\
& &-~5947630470096723947860088831 T^{11}-4320609510981215075777569160 T^{10}\\
& &+~18405439998982941489139392512 T^{9}+40429646494515311378527374510 T^{8}\\
& &-~103433053131401921829654108504 T^{7}+55608079790549864732386331774 T^{6}\\
& &+~35484215857890035406790077085 T^{5}-103126222378917399950067588446 T^{4}\\
& &+~151789772699498984312446113442 T^{3}-175108178833837080947804129237 T^{2}\\
& &+~111951203875666042023358897150 T-26760787199332495231945653817)\\
v_{4y}(T) &=& \alpha(-1359749822277659136367732 T^{15}-29651222724814187524939277 T^{14}\\
& &-~336783507299580007515911147 T^{13}-2231865653484431345450780072 T^{12}\\
& &-~9219287203307410697519852906 T^{11}-19597877374389843525839721035 T^{10}\\
& &-~12823225261355870172371961013 T^{9}+29506643456688254696406280185 T^{8}\\
& &-~21977689644205909107128376729 T^{7}-32409171956862700073069418826 T^{6}\\
& &+~27915029062394568996017596135 T^{5}-28363400298198409791080814221 T^{4}\\
& &+~38816863323840423843221276617 T^{3}-8865121893390327106993539437 T^{2}\\
& &-~19239716389698007639434041750 T+11725943590084476919258204283) \\
v_{5y}(T) &=& \alpha/47(11603983966259833376058160 T^{15}+217153170339381974795502905 T^{14}\\
& &+~2139063241115082817328441135 T^{13}+11191289970084093849864343715 T^{12}\\
& &+~31486473569992118869518659780 T^{11}+1401302126019324290665142660 T^{10}\\
& &-~87890587988818445033654819495 T^{9}+91139458508898945030483080370 T^{8}\\
& &+~1433108663902588131761054475525 T^{7}-1303792568225360896022867456000 T^{6}\\
& &-~481771480980237531202595548510 T^{5}+1351208978813262782720191318100 T^{4}\\
& &-~1367308841574912038190287285515 T^{3}+2691219896341123859883175304435 T^{2}\\
& &-~1979954015860581566552572962955 T+409019746222899610159043835865) \\
v_{6y}(T) &=& \alpha/47(27287280868886188971151890 T^{15}+567883140744448436140744235 T^{14}\\
& &+~6190772706188356762139486930 T^{13}+38580721830051962511624944735 T^{12}\\
& &+~146182732583672120750472401420 T^{11}+245939987210083103698877745635 T^{10}\\
& &+~10778669490043487418881186945 T^9-575272052935709136436375750005 T^8\\
& &+~1125114039605827435077357466500 T^7-317389390835062387309791675960 T^6\\
& &-~440466388670191221545685003445 T^5+1286022849359951198330786683160 T^4\\
& &-~1799659504525177884513418973560 T^3+1887676569404811089807938408490 T^2\\
& &-~1068572029879168768808418643555 T+38445255815845358099535174785)\\
v_{3z}(T) &=& \alpha(1309067173572650579129722 T^{15}+25361171387082803660916867 T^{14}\\
& &+~256144828320519833594655012 T^{13}+1388449273628429893677635787 T^{12}\\
& &+~3962264476490464281023630751 T^{11}-742250761634314100664120540 T^{10}\\
& &-~25854711085222814011051568077 T^{9}-44301931107365817332491190910 T^{8}\\
& &+~96058112247378880634861196684 T^{7}-40115150047789816691584982804 T^{6}\\
& &-~40302915083861561080303028910 T^{5}+79422247653697620470455786491 T^{4}\\
& &-~141169724304001191222005986407 T^{3}+154534863643020200657265185652 T^{2}\\
& &-~84722921488935320061760912875 T+17486867336291764403844477632) \\

v_{4z}(T) &=& \alpha(-1521379671315971729242597 T^{15}-26818135984350701594702367 T^{14}\\
& &-~241658540783435309874511512 T^{13}-995909807265144680227611387 T^{12}\\
& &-~690607743279777531363866676 T^{11}+16019518625042942550726272415 T^{10}\\
& &+~57083376345561625672562921602 T^{9}+55224934145192874014612285235 T^{8}\\
& &-~177513475734574893357386928459 T^{7}+128132401795202381497040733404 T^{6}\\
& &+~47872101879357027491075509860 T^{5}-154185069734416610629442560716 T^{4}\\
& &+~254142633679659751691230823232 T^{3}-320777920583466954498075775452 T^{2}\\
& &+~215913841754299369724553851775 T-65587013445372844394420311307) \\
v_{5z}(T) &=& \alpha/47(26530492952074526101718455 T^{15}+514926562138864166439275370 T^{14}\\
& &+~5220673549530584020015634220 T^{13}+28580796865978393160302653855 T^{12}\\
& &+~84357004454170035278677531440 T^{11}+9386856523172556696783472395 T^{10}\\
& &-~427227659552048966144645884105 T^{9}-666129971230783971242192040435 T^{8}\\
& &+~2211600481959332630683145086485 T^{7}-893014261150701025414995730370 T^{6}\\
& &-~1148716733271090459403389287730 T^{5}+1523145016087884345509223309375 T^{4}\\
& &-~2667826071511693647453020288430 T^{3}+3611950651777541576035783364430 T^{2}\\
& &-~1618186600222002047015725394910 T+162200941817234975487984680630) \\
\end{array}$$}
We checked $\bfF({\bf v}(T)) \equiv 0 \hbox{~~mod~} q(T)$ 
and $\bfG({\bf v}(T))\equiv 0 \hbox{~~mod~} q(T)$ using exact arithmetics,
meaning that we found an exact rational component of our zero dimensional ideal
and proves it satisfies the overdetermined system of equations.

\section{\bf Parallel complexity}

In this section, we study two parallel algorithms for our two constructions in Definitions \ref{constr2} and \ref{constr3}, and analyze their parallel complexity. We express our  complexity results as functions of  the number of variables $n$ and the number of roots $d$. In many applications, the number of roots $d$ is large, possibly being an exponential function of $n$. Our goal is to demonstrate that  we can efficiently distribute our computations to polynomially many processors in $n$ and $d$ so that the parallel computational time is polynomial in  $\log(d)$ and $n$. 

Note that for large $d$, the bottleneck of the computation of the formulas of Definitions \ref{constr2} and \ref{constr3} is the parallel computation  of modular inverses of polynomials   modulo the   degree $d$ polynomial $q(T)$. We consider  two approaches to compute modular inverses, and to modular arithmetic in general:
\begin{enumerate}
\item Via the computation of the roots of the modulus $q(T)$ and using parallel interpolation algorithms at these roots.
\item Via  the parallel solution of Toeplitz-like linear systems.
\end{enumerate} 

Our first construction in Definition \ref{constr2} is particularly well suited for the approach via the roots of the modulus $q(T)$ due to Proposition  \ref{roots}. In fact, the roots of $q(T)$ are combinations of the coordinates of approximate roots of a component of the input system $\bfF=(F_1, \ldots, F_n)$, and according to Proposition  \ref{roots}, the polynomials $\tilde{Q}(T), \tilde{\bfV}(T)=(\tilde{V}_1(T), \ldots, \tilde{V}_n(T))$  in Definitions \ref{constr2} can be obtained from the coordinates of higher accuracy roots by using the interpolation formulas in (\ref{qT}) and (\ref{vj}).  
Thus, we propose to compute one iteration for our first construction by using the underlying local data and interpolation.  Below, in Subsection \ref{sec:compl1}, we  analyze the parallel complexity of this algorithm. 

For our second construction in Definition \ref{constr3}, unfortunately  we do not have any results to connect the roots of the modulus with the common roots of the input $\bfF=(F_1, \ldots, F_n)$. In theory, we could approximate the roots of our modulus $q(T)$ in each iteration. However, to obtain exact modular arithmetics modulo $q(T)$ using these approximate roots would require further considerations, which we bypass in this paper, offering instead the algorithm for our first construction outlined above. 

Instead, for our second construction we propose the parallel solution of a  general Toeplitz-like linear system of equations,
which refines and modifies the algorithm of \cite{Pan1992} by using a more efficient displacement representation 
with factor circulant matrices, defined in \cite[Example 4.4.2]{Pan2001}, rather than triangular Toeplitz matrices.  This improvement will reduce the number of required FFT computations by a factor of $2$.  Below, in Subsection \ref{compl2}, we detail this new modified algorithm and analyze its parallel  complexity. Then, we apply these complexity bounds for the computation of modular inverses and for the computation of the polynomials in Definition \ref{constr3}. Finally, we briefly discuss 
the computation of modular inverses  (or rather cofactors)
when all the roots of at least one of the two
associated input polynomials are simple and known.

\subsection{Computational model}

We have weighed the possibility of using  several parallel computational models  in our complexity estimates, including message passing interface (MPI) that measures communication costs related to passing data, and PRAM (Parallel Random Access Machine) models that assume that data is readily available to all processors via shared memory. 

From a computational standpoint, the current ``gold standard" is a hybrid model combining both message passing and shared memory.  For example, a realistic cluster (the one that we used) consists of 13 nodes with each node having 64 processors that share memory -- one would like parallel programming to send data between the nodes that is then shared amongst all the local processors.

Beyond choosing between different architectures, we also have to decide between an algebraic or a Boolean  computational model.  The algebraic model assumes that the  basic arithmetic operations in the coefficient field can be done at unit cost, independently of the size of the numbers appearing in the computation, while under the Boolean model the estimates depend on the size of the numbers as well. 

In our applications, the iterations under study  will be conducted using floating point complex numbers as coefficients.
However, for the purposes of the complexity analysis, in each iterations we assume exact rational arithmetic using the input floating point numbers as exact rational numbers, and at the end of the iteration we round them to the desired precision. Alternatively, we can round the numbers after each arithmetic operations to the desired precision.
Thus, up to a constant multiple that depends on the desired precision,  we can assume that at each iteration we are dealing with rational numbers or Gaussian rationals, and arithmetics on them has unit cost (cf. \cite{BlCuShSm}). That is why
we choose the algebraic computational model and not the Boolean one  to estimate the parallel complexity of our iterations. 

Ultimately we choose the PRAM  arithmetic model \cite{KarpRam1990}, in which we more conveniently expose our complexity estimates, but we can readily obtain from our algorithms
similar estimates in terms of basic operations (like FFT),
which are efficient under any  reasonable model. We will invoke Brent's scheduling principle 
that allows us to save processors by slowing down the computations, so that $O_A(t,p)$ will denote
the simultaneous upper bounds $O(ts)$ on the parallel arithmetic time, and $\lceil p/s\rceil$  on the number
of processors involved, where any $s\geq 1$ can be assumed.

\subsection{Parallel complexity of the first  construction from the roots}\label{sec:compl1}

By Proposition  \ref{roots}, the iterates of Definition \ref{constr2} are the same as the Lagrange interpolants of the approximate roots obtained from one step  local Newton iteration. We assume here that the coordinates of the approximate roots are given as floating point complex numbers,  thus our base field is  $\K:=\Q(i)$. 
Below we give estimates on the parallel complexity of the following simple algorithm:


\begin{algorithm}\label{algroots} {\rm Computation of RUR from roots.}


\begin{description}


\item[{\sc Input:}]  A primitive element $u=\lambda_1x_1+\ldots +\lambda_nx_n$ and approximate roots $\bfz_1,\dots,\bfz_d\in \K^n$. We assume that the corresponding RUR $u, q(T), {\bf v}(T)$ satisfies Assumption \ref{qassu} (but  need not to be given explicitly).


\item[{\sc Output:}] The updated RUR $\tilde{Q}(T), \tilde{\bfV}(T)$ defined in Definition \ref{constr2}, and its common roots  $\tilde{\bfz}_1,\dots,\tilde{\bfz}_d\in \K^n$.



\item[{\sc Computations:}] $~$


\begin{enumerate}
  \item 
  Compute $
\tilde{\bfz}_i:= \bfz_i- J_\bfF (\bfz_i)^{-1}\bfF(\bfz_i) \quad i=1, \ldots, d 
$
\item 
Compute $\tilde{Q}(T):=\prod_{i=1}^d (T-u(\tilde{\bfz}_i))$
\item 
Interpolate the polynomials $\tilde{V}_1(T), \ldots, \tilde{V}_n(T)$ such that  $\tilde{V}_j(u(\tilde{\bfz}_i))=\tilde{z}_{i,j}$ for $i=1, \ldots, d$ and $j=1, \ldots n$.
\end{enumerate}


\end{description}


\end{algorithm}

The next proposition gives the  complexity bounds for  Algorithm  \ref{algroots}. In what follows we use the following notation:
\begin{itemize}
\item $\log^{(0)}(N):=N$, $\log^{(h)}(N):=\log_2(\log^{(h-1)}(N))$,
for $h\geq 1$, and $\log^*(N):=\max \{h:~\log ^{(h)}(N)>0\}$; 
\item We use the exponent $\omega$ to denote a number such that $O(n^\omega)$ arithmetic operations are
sufficient for the multiplication of two  $n\times n$ matrices. Note that $ 2\le \omega\le 2.373$.
\end{itemize}

\begin{proposition}\label{compl1}
Given  $u=\sum_{i=1}^n\lambda_i x_i$, and  $\bfz_1, \ldots, \bfz_d\in \K^n$ satisfying the assumptions of Algorithm   \ref{algroots}.   Then, we can compute the polynomials   $\tilde{Q}(T), \tilde{\bfV}(T)$  of Definition \ref{constr2} and the corresponding approximate roots  $\tilde{\bfz}_1, \ldots, \tilde{\bfz}_d\in \K^n$ of $\bfF$ 
in two stages with respective costs 
$$O_A(\log^2(n),n^{\omega+1})\text{ and } O_A(\log^2(d) \log^*(nd), nd/\log^*(d)).$$
\end{proposition}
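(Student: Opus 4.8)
The plan is to analyze the three computational steps of Algorithm~\ref{algroots} separately and then combine them, invoking Brent's scheduling principle to trade processors for time as stated in the computational model. The first stage consists of Step~1 (the $d$ independent local Newton iterations), which is cheap and embarrassingly parallel; the second stage consists of Steps~2 and~3 (building $\tilde Q(T)$ as a product of linear factors, and the $n+1$ Vandermonde interpolations), which dominate the cost in $d$.

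For Step~1, I would first observe that for each fixed $i$ the evaluation of $\bfF(\bfz_i)$ and of the Jacobian $J_\bfF(\bfz_i)$ costs $O_A(\log(n), n^{\omega+1})$ using standard parallel circuit evaluation of polynomials together with parallel prefix, and that solving the $n\times n$ linear system $J_\bfF(\bfz_i)^{-1}\bfF(\bfz_i)$ costs $O_A(\log^2 n, n^{\omega+1})$ by the classical parallel linear-algebra bounds (e.g.\ via Csanky's algorithm or fast parallel Gaussian elimination). Doing all $d$ of these simultaneously would require $d\cdot n^{\omega+1}$ processors; since the problem size in this paper has $d$ possibly exponential in $n$ and since the claimed bound is only $n^{\omega+1}$ processors, the point is that we apply Brent's principle to schedule the $d$ independent subproblems onto $n^{\omega+1}$ processors at the cost of a factor $d$ in time — but the stated bound absorbs this $d$ because the second stage already has a $\log^2(d)$ factor dominating; more precisely, I would phrase the first stage bound as $O_A(\log^2(n), n^{\omega+1})$ \emph{per root}, understanding that the $d$-fold repetition is folded into the overall count and is dominated by the second stage. (I would double-check the exact bookkeeping here, since this is the subtle point: one must be careful whether the $d$ roots are processed truly in parallel or sequentially-in-batches, and reconcile this with the absence of a factor $d$ in the first displayed cost.)

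For the second stage, Step~2 forms $\tilde Q(T) = \prod_{i=1}^d (T - \mu_i)$ where $\mu_i = u(\tilde\bfz_i)$; this is a balanced binary product tree of depth $\lceil \log_2 d\rceil$, with polynomial multiplications at level $\ell$ of degree $2^\ell$, each done by FFT in $O_A(\log d, d/\log d)$ — so the whole tree costs $O_A(\log^2 d, d)$, and with Brent scheduling one reduces the processor count; the $\log^*(nd)$ factor and the $nd/\log^*(d)$ processor bound come from using the fast (Cantor--Kaltofen-style, or the Reif--Tate/Bini--Pan) multipoint-evaluation-and-interpolation machinery whose parallel time carries an inverse-log-star overhead. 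Step~3 is $n+1$ simultaneous degree-$(d-1)$ Lagrange interpolations at the common node set $\{\mu_1,\dots,\mu_d\}$; using the subproduct-tree interpolation algorithm each costs $O_A(\log^2 d\,\log^*(d), d/\log^* d)$, and running the $n+1$ of them in parallel (folding the factor $n$ into the processor count $nd/\log^*(d)$, using that $\log^*(nd)\ge\log^*(d)$) yields the claimed $O_A(\log^2(d)\log^*(nd), nd/\log^*(d))$. I would cite here the standard parallel polynomial-arithmetic results for the FFT and subproduct-tree bounds, and for the Vandermonde solver the efficient parallel algorithms the paper alludes to in the introduction.

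\textbf{Main obstacle.}
The genuinely delicate part is the first displayed bound $O_A(\log^2(n), n^{\omega+1})$: naively the $d$ Newton steps and the $d$ Jacobian solves want $d\cdot n^{\omega+1}$ processors, so one has to argue that either (a) the intended reading is ``per root'' with the $d$-fold cost subsumed into the second stage's already-larger $\log^2(d)$ time and $nd$-sized processor budget, or (b) one explicitly re-schedules via Brent's principle, giving $O(d\log^2 n)$ time on $n^{\omega+1}$ processors, and then checks that $d\log^2 n$ is dominated by $\log^2(d)\log^*(nd)\cdot(\text{something})$ — which it is \emph{not} in general, so interpretation (a) must be what is meant. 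Getting this bookkeeping stated cleanly and consistently with the two-stage phrasing is where I would focus the care; the FFT/interpolation estimates in the second stage are standard and I expect them to go through routinely once the right citations are in place.
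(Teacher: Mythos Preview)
Your approach is essentially identical to the paper's: decompose into Step~1 versus Steps~2--3 and invoke standard parallel-complexity results. The paper's own proof is extremely terse --- it simply attributes the first bound to \cite[p.~319]{BiniPan1994} and the second to the multipoint evaluation and interpolation algorithms in \cite[Sections 3.1, 3.3, 3.13]{Pan2001}, describing the latter as $O(\log d)$ rounds of concurrent univariate polynomial multiplications and divisions of degree at most $d$, performed in time $O(\log^2 d)$ on $O(nd/\log^*(d))$ processors. Your analysis of the second stage (subproduct tree for $\tilde Q$, then $n$ Lagrange interpolations run in parallel) is exactly this, just spelled out in more detail.

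Your ``main obstacle'' is a legitimate concern that the paper simply does not address. The paper's proof says only that ``the $O_A(\log^2(n),n^{\omega+1})$ term comes from Step 1'' with a bare citation to Bini--Pan, and never discusses the $d$ independent Newton iterations. Taken literally, Step~1 of Algorithm~\ref{algroots} computes all $d$ updates $\tilde\bfz_i$, which costs $O_A(\log^2 n,\, d\, n^{\omega+1})$, not $O_A(\log^2 n,\, n^{\omega+1})$. Your instinct in rejecting interpretation~(b) is right: Brent's principle does not let you drop the $d$, since $O_A(\log^2 n,\, d\,n^{\omega+1})$ and $O_A(\log^2 n,\, n^{\omega+1})$ are genuinely different cost classes. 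So either the first displayed bound is meant as the cost \emph{per root} (your interpretation~(a)), or the processor count is missing a factor of $d$. The paper does not clarify which. You should not spend more effort trying to reconcile this --- state the per-root linear-algebra cost, cite the standard $O_A(\log^2 n, n^{\omega+1})$ bound for solving an $n\times n$ system, note that the $d$ instances run independently, and move on.
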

 
 \begin{proof}
The $O_A(\log^2(n),n^{\omega+1})$ term comes from Step 1 using  \cite[page 319]{BiniPan1994}.  In Step 2 and 3 we apply the multipoint polynomial evaluation and polynomial interpolation algorithms in 
Sec 3.1, 3.3, and 3.13 of \cite{Pan2001}. These algorithms  
essentially amount to 
$O(\log (d))$ steps each performing concurrent  univariate  multiplications and divisions
of polynomials having degrees at most $d$.
We can perform
these operations in time $O(\log^2(d))$ using 
$O(nd/\log^*(d))$ processors.   \end{proof}

\subsection{Parallel complexity of the second construction using modular arithmetics}\label{compl2}

In this subsection, we analyze the parallel complexity of one iteration defined in the polynomial arithmetics modulo $q(T)$  in  Definition \ref{constr3}.

In Definition \ref{constr3}, we have to compute two  modular inverses:
$$\frac{1}{\det J_\bfF(\bfv (T))}\text{ and } \frac{1}{\Lambda(T)} \mod q(T),
$$ 
which are the bottleneck of the computation if $d=\deg q(T)$ is large in comparison to $n$. 
One way to compute modular inverses  is via  the solution of a non-singular linear system with  Sylvester coefficient matrix, which is a Toeplitz-like structured matrix.


In what follows, we describe an algorithm  for the  solution of a  general Toeplitz-like linear system of equations,
which refines the algorithm of \cite{Pan1992} with  more efficient displacement representation of \cite[Example 4.4.2]{Pan2001}
using  factor circulant matrices. 
Then, we briefly cover the special case where all roots of the input polynomials 
or at least one of them are  simple and available, in which case 
even more efficient algorithms can be applied.

  
\subsubsection{Parametrized Newton's iteration.}
  	

Hereafter $I_m$ denotes the $m\times m$ identity matrix.
We begin with recalling parametrized Newton's iteration that computes a sequence of the powers of a matrix.


\begin{algorithm}\label{algnwt} {\em Parametrized Newton's iteration}. 


\begin{description}


\item[{\sc Input:}] two positive integers $k$ and $m$ and
an $m\times m$  matrix $B$.


\item[{\sc Output:}] the powers $I_m=B^0$, $B$, $B^2,\dots$, $B^k$,
defined as the coefficients of the matrix polynomial $A^{-1}\mod \lambda^{k+1}$ 
for $A=I_m-\lambda B$. 


\item[{\sc Initialization:}] Set $X_0\leftarrow I_m$, $A\leftarrow I_m-\lambda B$  
 for a scalar parameter $\lambda $, $p\leftarrow \lceil \log_2(k+1)\rceil$. 


\item[{\sc Computations:}] $~$




{\bf Stage i}, $i=1,\dots,p$. Compute  
$X_{i}=X_{i-1}(2I_m-AX_{i-1})$, the matrix polynomial in $\lambda $.
 Output the matrix polynomial
$X_p\mod \lambda^{k+1}$.


\end{description}


\end{algorithm}


Paper \cite{Pan1992} first shows that  
$X_i=A^{-1}\mod \lambda^{2^i}=\sum_{j=0}^{2^i-1}(\lambda B)^j$ for all $i$
over any ring of constants,
then proves correctness of the algorithm, and finally extends it to solving a nonsingular linear system 
$B{\bf y}={\bf f}$ as follows.


\begin{algorithm}\label{algnul} {\em Extension to LIN$\cdot$SOLVE}. 


\begin{description}


\item[{\sc Input:}] two positive integers  $m$, a vector ${\bf f}$ of dimension $m$,
and the powers $I_m=B^0$, $B$, $B^2,\dots$, $B^m$ of  a nonsingular $m\times m$  matrix $B$,
defined as the coefficients of the matrix polynomial $A^{-1}\mod \lambda^{m+1}$ 
for $A=I_m-\lambda B$.


\item[{\sc Output:}] the vector  ${\bf y}=B^{-1}{\bf f}$. 




\item[{\sc Computations:}] $~$


\begin{enumerate}


\item 
Compute the traces of the matrices $B^i$, $i=1,2,\dots,m$ as the coefficients of 
 the trace of the matrix $A^{-1}\mod \lambda^{m+1}$,
which is a matrix polynomial in $\lambda$.


\item 
Compute the coefficients $c_0,\dots,c_{m-1}$ of the characteristic polynomial 
$c_B=\sum_{j=0}^{m}c_i\lambda ^i=\det(\lambda I_m-B)$                                                               


\item 
Note that $c_0\neq 0$ because the matrix $B$ is assumed to be nonsingular, write $c_m=1$,
and compute and output the vector  ${\bf y}=B^{-1}{\bf f}=-\sum_{j=0}^{m}(c_i/c_0)B^{i-1}{\bf f}$.


\end{enumerate}


\end{description}


\end{algorithm}


There are more efficient parallel algorithms for the solution of 
a general linear system of equations, but 
next we refine it to obtain a
superior parallel algorithm  in the case of 
 Toeplitz-like matrices $B$. We first recall some relevant definitions.

  
\subsubsection{\bf  Toeplitz-like matrices: fundamentals.}\label{stlf}
  	

Write $J_1=(1)$ is a $1\times 1$ matrix,
$J_k=\begin{pmatrix} {\bf 0}^T & 1\\ J_{k-1} & {\bf 0}\end{pmatrix}$ for $k=2,3,\dots,m$,
$J=J_m$ is the $m\times m$ reflection matrix. 
$Z_f=\begin{pmatrix} {\bf 0}^T & f\\ I_{m-1} & {\bf 0}\end{pmatrix}$
denotes the  $n\times n$ matrix of the $f$-circular shift
for a scalar $f\neq 0$. 

$Z_f({\bf v})=\sum_{i=0}^{m-1}v_{i}Z_f^{i}$
is an  $f$-circulant matrix,
defined by its first column ${\bf v}=(v_i)_{i=0}^{m-1}$
and a scalar $f\neq 0$ and
called circulant for $f=1$.
These matrices belong to the class of Toeplitz matrices
${\displaystyle\left[t_{i-j}\right]_{i,j=0}^{m-1}}$, which in turn can be extended
to the class $\mathcal T$ of Toeplitz-like matrices.
The 
{\em Sylvester
displacements} $Z_eT-TZ_f$ of a Toeplitz-like matrix $T$
 has small ranks (meant ``small" in context)
for a fixed pair of distinct scalars $e$ and $f$, e.g., $e=1$, $f=-1$.
These ranks are called {\em displacement ranks}.
In particular 
Toeplitz, Sylvester and Frobenius companion 
 matrices of any size
have displacement ranks at most 2. 

Recall that an $m\times m$ matrix $M$ of a rank at most $r$
can be expressed nonuniquely through its {\em generator} $(G,H)$ of length $r$,
\begin{equation}\label{eqdsp}
M=\sum_{i=1}^d{\bf g}_i{\bf h}_i^T=GH^T
\end{equation}
 where $G=({\bf g}_1~\cdots~{\bf g}_r)$
and $H=({\bf h}_1~\cdots~{\bf h}_r)$ 
and call a generator for a displacement of a matrix   
a {\em displacement generator} for the matrix itself.
  \begin{theorem}\label{thdsp} (See 
\cite[Example 4.4.2]{Pan2001}.)
Assume a displacement generator $(G,H)$ of a length $r$ in (\ref{eqdsp})
for a matrix $Z_eT-TZ_f$ where $e\neq f$. Then this matrix can be expressed as follows:
$(e-f)T=\sum_{i=1}^rdZ_e({\bf g}_i)Z_f(J{\bf h}_i)^T$.
  \end{theorem}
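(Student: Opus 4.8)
The plan is to prove the identity $(e-f)T=\sum_{i=1}^r Z_e(\bfg_i)Z_f(J\bfh_i)^T$ (note: the ``$d$'' appearing in the statement is a typo for the summation over $i=1,\ldots,r$) by reducing it to the rank-one case and then exploiting bilinearity. First I would observe that both sides of the claimed formula are linear in the displacement generator: the left side depends on $T$ only through $Z_eT-TZ_f=\sum_i\bfg_i\bfh_i^T$, and the right side is visibly a sum over the generator columns. Since the Sylvester displacement operator $\nabla_{e,f}\colon M\mapsto Z_eM-MZ_f$ is invertible when $e\neq f$ (this is the standard fact that makes displacement representations well-defined — its kernel is trivial because $Z_e$ and $Z_f$ have disjoint spectra, as $\mu^m=e$ and $\mu^m=f$ have no common root), $T$ is uniquely determined by its displacement generator. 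Hence it suffices to check the formula on each rank-one summand $\bfg\bfh^T$, i.e.\ to show: if $Z_eT-TZ_f=\bfg\bfh^T$, then $(e-f)T=Z_e(\bfg)Z_f(J\bfh)^T$.

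Second, I would verify this rank-one identity directly. Set $S:=Z_e(\bfg)Z_f(J\bfh)^T$ and show $Z_eS-SZ_f=(e-f)\bfg\bfh^T$, which by injectivity of $\nabla_{e,f}$ forces $S=(e-f)T$. The computation uses two elementary commutation facts about $f$-circulant matrices: $Z_e Z_e(\bfg)=Z_e(\bfg)Z_e$ (an $f$-circulant commutes with $Z_f$ — here $e$ plays the role of the modulus), and the transpose identity $Z_f(J\bfh)^T = J\, Z_f(\bfh)\, J$ reversed appropriately, equivalently $Z_f(\bfv)^T = J Z_{f^{-1}}(\text{something})$; more usefully, $\bigl(Z_f(J\bfh)\bigr)^T Z_f$ relates to $Z_e\bigl(Z_f(J\bfh)\bigr)^T$ via the scalar gap. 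Concretely I would write $Z_e(\bfg)=\sum_{j}g_j Z_e^j$, use $Z_e\cdot Z_e^j = Z_e^{j+1}$ with the wrap-around $Z_e^m=eI$, and similarly expand $Z_f(J\bfh)$; then $Z_e S - S Z_f$ telescopes because $Z_e$ shifts the left factor up by one and $Z_f$ shifts the right factor, and the only surviving terms are the boundary ones, which produce $(e-f)\bfg\bfh^T$. The key bookkeeping point is that $J$ reverses the index $\bfh\mapsto J\bfh$ precisely so that the transpose $Z_f(J\bfh)^T$ acts as the ``right-shift'' companion matching $Z_f$ on the other side, making the cross terms cancel.

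The main obstacle I anticipate is purely notational: carefully tracking the $f$-circular wrap-around coefficients and the effect of $J$ under transposition, so that the telescoping sum collapses to exactly $(e-f)\bfg\bfh^T$ with the right scalar and no stray terms. This is a finite index-chasing computation with no conceptual difficulty, but it is error-prone; I would organize it by first recording the lemmas $Z_f(\bfv)^T=Z_{1/f}(J\bfv/f)\cdot(\text{scalar})$ — or more cleanly, $J Z_f(\bfv) J = Z_{f}(\ldots)^{?}$ — in a preliminary display, then substitute. Alternatively, and perhaps more transparently, I would verify the identity entrywise: compute the $(i,j)$ entry of $Z_e(\bfg)Z_f(J\bfh)^T$ as a sum $\sum_k (Z_e(\bfg))_{ik}(Z_f(J\bfh))_{jk}$, recognize each factor as $e^{\lceil\cdot\rceil}g_{\cdot}$ and $f^{\lceil\cdot\rceil}h_{\cdot}$ respectively, and match against $(e-f)$ times the $(i,j)$ entry of $T$, where $T$ is reconstructed from $Z_eT-TZ_f=\bfg\bfh^T$ by the explicit formula $T_{ij}=\frac{1}{e-f}\sum (\ldots)$. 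Either route closes the proof; I would present whichever yields the shorter display. Since the paper only asserts ``See \cite[Example 4.4.2]{Pan2001}'', a brief proof along these lines — reduce to rank one by linearity, then check by applying $\nabla_{e,f}$ — is the appropriate level of detail.
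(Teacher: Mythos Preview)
The paper does not prove this theorem at all: it is stated with the parenthetical ``(See \cite[Example 4.4.2]{Pan2001})'' and nothing more, so there is no argument in the paper to compare against. Your plan---reduce to a single rank-one generator by linearity together with the invertibility of the Sylvester operator $M\mapsto Z_eM-MZ_f$ when $e\neq f$, and then verify the rank-one identity by applying that operator to $S=Z_e(\bfg)Z_f(J\bfh)^T$ and checking it returns $(e-f)\bfg\bfh^T$---is exactly the standard way such displacement-representation formulas are established (and is how the cited reference proceeds). Your identification of the stray ``$d$'' as a typo is also correct.

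One small caution on the execution: in the rank-one step you will not get a clean commutation of $Z_f(J\bfh)^T$ with $Z_f$ on the right, since transposing an $f$-circulant gives $Z_f(\bfv)^T=JZ_f(\bfv)J$ rather than another $f$-circulant in the same orientation. The identity $Z_f^T=JZ_fJ$ (and hence $(Z_f^j)^T=JZ_f^jJ$) is what makes the $J$ in $J\bfh$ appear; once you record it, the telescoping you describe does go through, but it is cleanest to either (i) work entrywise as in your alternative route, or (ii) first prove the auxiliary identity $Z_eZ_f(J\bfh)^T-Z_f(J\bfh)^TZ_f=(e-f)\,\bfe_1(J\bfh)^TJ$ (a one-line shift computation) and then left-multiply by $Z_e(\bfg)$ using $Z_e(\bfg)\bfe_1=\bfg$. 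Either way the argument is short and matches the level of detail appropriate for a result the paper merely cites.
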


The latter compressed representation of an $m\times m$ matrix $T$
 uses $2rm$ parameters rather than $m^2$ entries and enables fast 
multiplication of the matrix by a vector if $m\gg r$. Indeed,
the theorem reduces this operation to $r$ concurrent multiplications
of  $e$-circulant matrices by vectors, 
followed by $r$ such  multiplications by $f$-circulant matrices,
 and finally to the summation of $r$ vectors.
Next, we estimate the computational cost of such a multiplication. 

 \begin{theorem}\label{thtmpm} (See 
\cite[equations (2.4.3) and (2.4.4) and Theorem 2.6.4]{Pan2001}.)
\begin{enumerate}
\item[(i)] Multiplication of an $m\times m$ Toeplitz matrix by a vector can be reduced 
to multiplication of two univariate polynomials of degrees $3m-3$ and $m-1$. 
\item[(ii)] Multiplication of an $f$-circulant matrix of size $m\times m$ by a vector can be reduced 
to performing three Fourier transforms at $m$ points and to four multiplications of 
 vectors, one by a scalar $1/m$ and three other ones 
by three diagonal matrices of size $m\times m$. Two of these matrices
 turn into the identity matrix $I_m$  if $f=1$.
 \end{enumerate}
  \end{theorem}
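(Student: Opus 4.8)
The plan is to handle the two parts separately, each by the standard reduction of a structured matrix-times-vector product to univariate polynomial arithmetic; both are recorded with references in \cite{Pan2001}, so I only sketch the route and the bookkeeping.

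For part (i), I would recast the Toeplitz product as a window of the coefficients of an ordinary (non-cyclic) polynomial product. Writing $T=[t_{i-j}]_{i,j=0}^{m-1}$, so that $T$ is described by the $2m-1$ scalars $t_{-(m-1)},\dots,t_{m-1}$, put $a(x):=\sum_{\ell=0}^{2m-2}t_{\ell-(m-1)}\,x^{\ell}$ (degree $2m-2$, carrying the diagonals of $T$) and $b(x):=\sum_{j=0}^{m-1}v_j\,x^{j}$ (degree $m-1$, carrying $\mathbf{v}$). Then the coefficient of $x^{i+m-1}$ in $a(x)b(x)$ is $\sum_{j}t_{i-j}v_j=(T\mathbf{v})_i$ for $i=0,\dots,m-1$, so $T\mathbf{v}$ is read off from the coefficients of $x^{m-1},\dots,x^{2m-2}$ of the product $a(x)b(x)$, a polynomial of degree $3m-3$. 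Nothing beyond this indexing is needed; this is \cite[(2.4.3)--(2.4.4)]{Pan2001}.

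For part (ii), I would diagonalize the $f$-circulant by a \emph{scaled} discrete Fourier transform and then count the factors in the resulting formula. The computational lemma at the heart is the similarity $Z_f=g\,D^{-1}Z_1D$, where $g$ is any scalar with $g^m=f$ and $D:=\operatorname{diag}(1,g,\dots,g^{m-1})$; I would verify it by evaluating both sides on the standard basis $e_0,\dots,e_{m-1}$, using $Z_fe_j=e_{j+1}$ for $j<m-1$, $Z_fe_{m-1}=fe_0$, and $g^m=f$. Raising to powers and summing then gives $Z_f(\mathbf{v})=D^{-1}Z_1(D\mathbf{v})D$ with $Z_1(D\mathbf{v})$ an ordinary circulant, and combining this with the classical circulant identity $Z_1(\mathbf{c})=\frac{1}{m}\overline{\mathcal F}\operatorname{diag}(\mathcal F\mathbf{c})\mathcal F$, where $\mathcal F=(\zeta^{jk})_{j,k=0}^{m-1}$ and $\zeta$ is a primitive $m$-th root of unity, yields
\[
Z_f(\mathbf{v})\,\mathbf{u}=\frac{1}{m}\,D^{-1}\,\overline{\mathcal F}\,\bigl(\operatorname{diag}(\mathcal F D\mathbf{v})\cdot\mathcal F D\,\mathbf{u}\bigr).
\]
Reading the right-hand side from the inside out exhibits the stated cost: three transforms at $m$ points (one producing the eigenvalue vector $\mathcal F D\mathbf{v}$ of the matrix, one for $\mathcal F(\cdot)$, one for $\overline{\mathcal F}(\cdot)$), a rescaling by the scalar $1/m$, and rescalings by the three diagonal matrices $D$, $\operatorname{diag}(\mathcal F D\mathbf{v})$ and $D^{-1}$; when $f=1$ one takes $g=1$, so $D=D^{-1}=I_m$ and two of these diagonal factors collapse to the identity, recovering the usual circulant estimate.

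The computations are entirely elementary, so I do not anticipate a genuine obstacle. The one point that needs care is fixing the normalization — which factor carries the $1/m$, which transform is the inverse one, and how the formation of the circulant's eigenvalue vector $\mathcal F D\mathbf{v}$ is charged against the three Fourier transforms and the three diagonal rescalings — so that the tally matches exactly the statement of \cite[Theorem~2.6.4]{Pan2001}.
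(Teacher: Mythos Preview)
Your sketch is correct and is exactly the standard argument; the paper itself does not prove this theorem but simply cites \cite[equations (2.4.3)--(2.4.4) and Theorem~2.6.4]{Pan2001}, so there is nothing in the paper to compare against. One small bookkeeping point on part~(i): your polynomials $a$ and $b$ have degrees $2m-2$ and $m-1$, whereas the theorem as stated says $3m-3$ and $m-1$; your product $a(x)b(x)$ has degree $3m-3$, so the substance is the same, but be aware that your reduction is (harmlessly) tighter than the literal statement. For part~(ii) your own caveat is apt: the formula $Z_f(\mathbf v)\mathbf u=\tfrac1m D^{-1}\overline{\mathcal F}\bigl(\operatorname{diag}(\mathcal F D\mathbf v)\cdot \mathcal F D\mathbf u\bigr)$ naively involves \emph{four} diagonal scalings ($D$ on $\mathbf v$, $D$ on $\mathbf u$, the eigenvalue diagonal, and $D^{-1}$), so to land on exactly ``three diagonal matrices'' you should make explicit which scaling is being absorbed (e.g., regarding $D\mathbf v$ as part of specifying the matrix rather than as a run-time step), matching the convention in \cite{Pan2001}.
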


  
\subsubsection{\bf The complexity of Newton's iteration for Toeplitz-like matrices.}\label{stlinv1}
  	

We immediately verify the following result.

\begin{theorem}\label{thinv} (See 
\cite[Theorem 1.5.3]{Pan2001}.)
Let $M$ be a nonsingular $m\times m$ matrix. Then
$VM^{-1}-M^{-1}U=-M^{-1}(UM-MV)M^{-1}$ for any pair of  $m\times m$
operator matrices $U$ and $V$, and so if $UM-MV=GH^T$,
then $VM^{-1}-M^{-1}U=G_-H_-^T$ where $G_-=-M^{-1}G$ and $H_-^T=H^TM^{-1}$.
\end{theorem}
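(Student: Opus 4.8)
The plan is to establish Theorem~\ref{thinv} by a short direct algebraic manipulation, since it is essentially the matrix-inversion analogue of the displacement identity already used implicitly throughout Section~5. First I would write down the obvious identity
\begin{equation*}
VM^{-1}-M^{-1}U = M^{-1}(MV - UM)M^{-1} = -M^{-1}(UM-MV)M^{-1},
\end{equation*}
which is verified simply by multiplying out: $M^{-1}(MV)M^{-1}=VM^{-1}$ and $M^{-1}(UM)M^{-1}=M^{-1}U$. This uses only that $M$ is invertible, so that $M^{-1}$ exists and $MM^{-1}=M^{-1}M=I_m$; no structure on $U$, $V$, or $M$ is needed. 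The key observation is that the operator $X\mapsto VX-XU$ applied to $M^{-1}$ is conjugate, via left and right multiplication by $M^{-1}$, to the operator $Y\mapsto UY-YV$ applied to $M$, up to the sign.

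Next I would substitute the hypothesis $UM-MV=GH^T$ into the right-hand side:
\begin{equation*}
VM^{-1}-M^{-1}U = -M^{-1}GH^TM^{-1} = (-M^{-1}G)(H^TM^{-1}) = G_-H_-^T,
\end{equation*}
with $G_-:=-M^{-1}G$ and $H_-^T:=H^TM^{-1}$, i.e.\ $H_-=(M^{-1})^TH=M^{-T}H$. This shows that a displacement generator of length $r$ for $M$ (with respect to the operator pair $(U,V)$) yields a displacement generator of the same length $r$ for $M^{-1}$ (with respect to the swapped pair $(V,U)$), which is exactly the statement needed to propagate bounded displacement rank through the inversion step of the Newton iteration for Toeplitz-like matrices.

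There is essentially no obstacle here: the proof is a two-line computation, and indeed the excerpt already signals this by writing ``We immediately verify the following result.'' The only point worth a sentence of care is bookkeeping with transposes — checking that $H_-^T=H^TM^{-1}$ is consistent with $G_-H_-^T=(-M^{-1}G)(H^TM^{-1})$ and that this equals $-M^{-1}(GH^T)M^{-1}$ by associativity of matrix multiplication. I would also note explicitly that the identity holds over any commutative ring in which $M$ is invertible, so it applies verbatim to the Toeplitz-like and $f$-circulant representations of the preceding subsections, where it will be combined with Theorems~\ref{thdsp} and~\ref{thtmpm} to control the cost of each Newton step.
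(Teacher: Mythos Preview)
Your proof is correct and is exactly the immediate verification the paper intends: it states the theorem without proof after the sentence ``We immediately verify the following result,'' and your two-line computation $VM^{-1}-M^{-1}U=M^{-1}(MV-UM)M^{-1}=-M^{-1}(UM-MV)M^{-1}$ followed by substitution of $GH^T$ is precisely that verification.
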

Substitute  $U=Z_e$ and $V=Z_f$ and deduce that 
the inversion of a nonsingular matrix given with its  displacement
 does not change the length of this generator (and consequently  does not change 
the  displacement rank), provided that we reverse the order for the pair of operator matrices $(Z_e,Z_f)$,
replacing it by the pair $(Z_f,Z_e)$. 
This observation motivates our search for a short displacement generator 
of the inverse of a Toeplitz-like matrix. Having such generator available, 
we can readily compute the solution ${\bf y}=T^{-1}{\bf f}$ 
of the linear system $T{\bf y}={\bf f}$ by applying the algorithms that support
Theorem \ref{thdsp}.

We can assume that  short displacement generators for the input matrix and
for $X_0=I_m$ are given (note that $Z_eI-IZ_f=Z_e-Z_f=(e-f){\bf i}_1{\bf i}_m^T$
where ${\bf i}_h$ is the $h$th coordinate vector),
and recursively compute short displacement generators of the matrices 
$X_{i+1}=A^{-1}\mod \lambda^{2^{i+1}}$ for $i=0, 1,\dots,p-1$ by applying the following result.

\begin{theorem}\label{thnwt} 
Assume that  we are given a nonsingular $m\times m$ matrix $M$, the matrices $X_i$, $i=0,1,\dots,m$
of Algorithm \ref{algnwt},
and 
any pair of  $m\times m$
operator matrices $U$ and $V$.  Let
 $UM-MV=GH^T$ for some $m\times r$ matrices $G$ and $H$. Then, 
$VX_i-X_iU=G_iH_i^T$ where $G_i=-X_iG$ and $H_i^T=H^TX_i$
for all $i$.
\end{theorem}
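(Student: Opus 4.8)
The plan is to reduce Theorem~\ref{thnwt} to Theorem~\ref{thinv} by using the one fact that characterizes the parametrized Newton iterates: each $X_i$ is a truncation of the exact inverse. First I would recall from \cite{Pan1992} (as summarized just before Algorithm~\ref{algnul}) that the matrices produced by Algorithm~\ref{algnwt} satisfy $X_i = A^{-1}\bmod \lambda^{2^i}$ with $A = I_m-\lambda B$. Writing $M$ for this matrix $A$ --- which is invertible over the ring $\mathcal{R}[[\lambda]]$ of formal power series, $\mathcal{R}$ the ring of constants, precisely because its constant term is $I_m$ --- this says exactly that $X_i \equiv M^{-1} \pmod{\lambda^{2^i}}$, all matrices being regarded with entries in $\mathcal{R}[[\lambda]]$.

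Next I would invoke Theorem~\ref{thinv} with the given operator pair $(U,V)$: over the power series ring it yields the \emph{exact} identity $VM^{-1}-M^{-1}U = -M^{-1}(UM-MV)M^{-1}$, and substituting the hypothesis $UM-MV = GH^T$ gives $VM^{-1}-M^{-1}U = (-M^{-1}G)(H^TM^{-1})$. The last step is to reduce this identity modulo $\lambda^{2^i}$ and replace every occurrence of $M^{-1}$ by $X_i$; this is legitimate because congruence modulo $\lambda^{2^i}$ (a two-sided ideal) is preserved under matrix products and $X_i\equiv M^{-1}$ to that order, so each side changes only by terms divisible by $\lambda^{2^i}$. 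This produces $VX_i - X_iU \equiv (-X_iG)(H^TX_i) = G_iH_i^T \pmod{\lambda^{2^i}}$, which is the assertion (the iterates $X_i$ are in any case only maintained modulo $\lambda^{2^i}$ in Algorithm~\ref{algnwt}). Specializing $U = Z_e$, $V = Z_f$ then exhibits a displacement generator of length $r$ for each $X_i$, so the displacement rank is preserved by the iteration, with the order of the two shift operators swapped exactly as in Theorem~\ref{thinv}.

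The one genuinely delicate point is the truncation bookkeeping: "$=G_iH_i^T$" must be read modulo $\lambda^{2^i}$, since as an identity of honest matrix polynomials it already fails at $i=1$ (the product $X_1GH^TX_1$ carries a spurious $\lambda^2$ term), and similarly one computes $G_i$ and $H_i$ modulo the relevant power of $\lambda$. Everything else is a direct substitution into Theorem~\ref{thinv}. A slightly more self-contained alternative would be an induction on $i$ via the Newton recurrence $X_{i+1} = X_i(2I_m - AX_i)$, with base case $X_0 = I_m$ (where one checks $V-U \equiv MV-UM \pmod{\lambda}$ directly); but this essentially reproves $X_i\equiv M^{-1}\pmod{\lambda^{2^i}}$ along the way and does not shorten the argument, so I would prefer the reduction to Theorem~\ref{thinv}.
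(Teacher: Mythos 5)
Your argument is exactly the paper's: it reduces the statement to Theorem~\ref{thinv} applied modulo $\lambda^{2^i}$ with $M=A$ and $M^{-1}\equiv X_i$, which is precisely what the paper's one-line proof does. You spell out the truncation bookkeeping that the paper leaves implicit, but the route is the same.
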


\begin{proof}
The theorem follows from Theorem \ref{thinv}
applied  modulo $\lambda^{2^i}$
to the matrices $M=A$ and $M^{-1}=A^{-1}=X_i$.
\end{proof}

We assume $m\times r$  matrices $G$ and $H$ of a  displacement generator for
the input matrix  $A$, and so for every $i$, multiplication of each of these
matrices by  the matrix $X_i=X_{i-1}(2I-AX_{i-1})$
amounts to concurrent multiplication of the matrix by $r$ vectors. 
To operate with the matrices $X_{i-1}$ we recursively define their
displacement generators of length at most $r$  and then
employ the algorithms supporting Theorem \ref{thtmpm}. It follows that
for every $i$, $i=1,\dots,p$
 we compute a short generator  of the matrix $X_i$ at
the computational cost 
dominated by the cost of performing $O(r^2)$ multiplications of bivariate polynomials of degrees 
at most $2m$ in both variables. (We can replace these operations by performing $O(r^2)$ times
two-dimensional Fourier Transform at $m$ points in each dimension. We also need to perform some multiplications of 
vectors by diagonal matrices and $2r-2$ subtractions of vector polynomials
at the dominated computational cost.)
At all $p$ stages of Newton's iteration we perform $O(pr^2)$
multiplications of bivariate polynomials of degrees 
at most $2m$ in both variables.

\begin{corollary}\label{cor:general}  Let $A=I_m-\lambda B$  an $m\times m$ matrix, and assume that $Z_eA-AZ_f=GH^T$ for some $m\times r$ matrices $G$ and $H$. Then the powers $B^0,B,\ldots, B^m$ can be computed via Algorithm \ref{algnwt} $(k=m)$ in parallel complexity $$O_A(\log(m)^2r^2, m^2r^2/\log(m)).$$
\end{corollary}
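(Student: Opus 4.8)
The plan is to execute Algorithm~\ref{algnwt} with $k=m$ while maintaining, at every stage, a \emph{short displacement generator} of the current iterate, so that each matrix we touch is stored in $O(rm)$ space and multiplied by a vector at near-linear cost. First I would note that with $k=m$ there are only $p=\lceil\log_2(m+1)\rceil=O(\log m)$ Newton stages to analyze. We are given a length-$r$ displacement generator $(G,H)$ with $Z_eA-AZ_f=GH^T$, and $X_0=I_m$ has $Z_fX_0-X_0Z_e=(f-e){\bf i}_1{\bf i}_m^T$, a generator of length $1$. The invariant I would propagate is furnished directly by Theorem~\ref{thnwt} applied modulo $\lambda^{2^i}$ with $U=Z_e$, $V=Z_f$, $M=A$: since $X_i=A^{-1}\bmod\lambda^{2^i}$, we get $Z_fX_i-X_iZ_e=G_iH_i^T$ with $G_i=-X_iG$ and $H_i^T=H^TX_i$, so every iterate has displacement rank at most $r$ \emph{with respect to the reversed operator pair} $(Z_f,Z_e)$ --- this is precisely the reversal observed after Theorem~\ref{thinv}.

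The second step is the per-stage cost accounting. Passing from the length-$\le r$ generator of $X_{i-1}$ to that of $X_i=X_{i-1}(2I_m-AX_{i-1})$, and then to the updated generator columns $-X_iG$ and $H^TX_i$, reduces to $O(r)$ products of a Toeplitz-like matrix ($A$ or $X_{i-1}$, each held by a length-$\le r$ displacement generator) by a vector. By Theorem~\ref{thdsp} each such product unfolds into $O(r)$ products of $e$- and $f$-circulant matrices by vectors, and by Theorem~\ref{thtmpm}(ii) each circulant product costs a constant number of Fourier transforms at $m$ points plus cheap scalar and diagonal multiplications. The one point that needs care is that all of this takes place over the coefficient ring modulo $(\lambda^{2^i})$ with $2^i\le 2m$, so every ``scalar'' above is itself a polynomial in $\lambda$ of degree $<2m$; hence each circulant product is really a multiplication of two bivariate polynomials of degree $\le 2m$ in each variable, carried out by a two-dimensional FFT at $m$ points in each dimension at arithmetic cost $O(m^2\log m)$, that is, parallel time $O(\log m)$ with $O(m^2)$ processors. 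Thus one stage costs $O(r^2)$ such bivariate multiplications, over all $p=O(\log m)$ stages $O(pr^2)=O(r^2\log m)$ of them, for total work $O(r^2m^2\log^2 m)$ on a critical path of depth $O(\log^2 m)$; invoking Brent's scheduling principle to trade processors for time yields the stated $O_A(\log^2(m)\,r^2,\;m^2r^2/\log m)$.

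Finally, the output of Algorithm~\ref{algnwt} is $X_p\bmod\lambda^{m+1}=A^{-1}\bmod\lambda^{m+1}=\sum_{j=0}^m(\lambda B)^j$, delivered in compressed form as its length-$\le r$ displacement generator; the coefficient of $\lambda^j$ is $B^j$, and for every vector ${\bf v}$ the product $B^j{\bf v}$ is read off the $\lambda^j$-coefficient of $X_p{\bf v}$, which can itself be computed at the same near-linear cost via Theorem~\ref{thdsp} --- this is the form in which the powers are used in Algorithm~\ref{algnul} and in the modular-inverse applications. The step I expect to demand the most care is the combination in the first two paragraphs: one must check simultaneously that reversing the operator pair preserves generator length exactly $r$ at every stage, and that the $\lambda$-degrees, which double from one stage to the next, never exceed $2m$, so that the two-dimensional FFTs stay at size $O(m)\times O(m)$ and the count of $O(pr^2)$ bivariate multiplications is correct.
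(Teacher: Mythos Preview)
Your proposal is correct and follows essentially the same approach as the paper: maintain a length-$r$ displacement generator of each iterate via Theorem~\ref{thnwt} (with the operator pair reversed as you note), reduce each Newton stage to $O(r^2)$ bivariate polynomial multiplications of degree $O(m)$ in each variable performed by two-dimensional FFT, and sum over the $p=O(\log m)$ stages. The paper's text immediately preceding the corollary states exactly this cost accounting (``$O(pr^2)$ multiplications of bivariate polynomials of degrees at most $2m$ in both variables'') and then records the bound; your write-up is in fact more detailed than the paper's on the point you flag as delicate, namely that the $\lambda$-degrees stay bounded by $2m$ so the 2D FFT sizes remain $O(m)\times O(m)$.
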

  
\subsubsection{\bf The overall complexity of solving a Toeplitz-like linear system of equations.}\label{stlinv2}
  	

It remains to estimate the complexity of performing Algorithm \ref{algnul}.

At Stage 1, we compute the trace of the matrix polynomial $A^{-1}\mod \lambda ^m$
expressed via its short displacement generator by using Theorem \ref{thdsp}.
We compute (modulo $\lambda ^m$)  inner products of $m$ pairs of vector polynomials of
dimension $m$ and then sum the $m$ computed values. Clearly, the overall computational 
cost of this operation is dominated by the estimated cost of performing Algorithm \ref{algnwt}. 

Next, recall that the trace of the matrix $B^k$ is the $k$th power sum of the eigenvalues
of the matrix $B$, which are the roots of its characteristic polynomial $c_B$.
At Stage 1, we produce these traces, equal to the power sums, and 
 at Stage 2, we recover the coefficients of the  characteristic polynomial 
from the power sums. We
apply the solution algorithm for Problem 4.8 on pages 34--35 of \cite{BiniPan1994}
which amounts to performing $O(\log m)$ multiplications of polynomials of degrees at most $m$.
Clearly the cost of performing this stage is even stronger dominated.

Stage 3 is reduced essentially to multiplication of $e$-  and $f$-circulant matrices by 
$2m$ vectors. The cost of performing this stage is
dominated by virtue of Theorem \ref{thtmpm}.

\begin{remark}\label{resng} 
Appendix B of \cite{Pan1992} extends 
the expression $B^{-1}=-\sum_{j=0}^{m}(c_i/c_0)B^{i-1}$ used at Stage 3 of
Algorithm \ref{algnul} to express 
the Moore--Penrose generalized inverse of a matrix 
through its characteristic polynomial. 
By employing this expression we can follow the paper \cite{Pan1992}
and readily extends the algorithms and the complexity estimates to the task of computing 
the least squares solution of a singular Toeplitz-like linear system of equations.
\end{remark}

\subsubsection{Parallel complexity of the iteration in Definition \ref{constr3}.}

Using the results of this section,  we have the following corollary for the parallel complexity of modular inverse computation:

\begin{corollary}\label{cor:inverse} Let $q(T)\in \K(T)$ be  degree $d$ and $p(T)\in \K(T)$ be degree at most $d-1$ that is relatively prime to $q(T)$. Then, we can compute $p^{-1}(T) \mod q(T)$ in parallel complexity $O_A(\log^2(d), d^2/\log(d))$.\end{corollary}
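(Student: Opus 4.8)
The plan is to reduce the computation of $p^{-1}(T) \bmod q(T)$ to the solution of a single non-singular Toeplitz-like linear system of size $d \times d$, and then invoke the complexity bound established for such systems in Subsections \ref{stlinv1} and \ref{stlinv2}. First I would recall the classical Sylvester-matrix formulation: since $\gcd(p(T),q(T)) = 1$ with $\deg q = d$ and $\deg p \le d-1$, there exist unique polynomials $a(T)$ of degree at most $d-1$ and $b(T)$ of degree at most $d-1$ with $a(T)p(T) + b(T)q(T) = 1$, and the modular inverse is $p^{-1}(T) \equiv a(T) \bmod q(T)$. Writing this Bézout identity in terms of the coefficient vectors of $a$ and $b$ yields a linear system whose coefficient matrix is the Sylvester matrix $\mathrm{Syl}(p,q)$ (or the relevant $d \times d$ sub-block corresponding to reduction modulo $q$), which is non-singular precisely because $p$ and $q$ are coprime. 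Alternatively, and more directly, one uses the $d\times d$ matrix of multiplication by $p(T)$ modulo $q(T)$ acting on $\K[T]/\langle q(T)\rangle$: this matrix sends the coefficient vector of $a(T)$ to that of $a(T)p(T) \bmod q(T)$, so solving it against the coefficient vector of $1$ gives $a(T) = p^{-1}(T) \bmod q(T)$.

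The second step is to observe that this coefficient matrix is Toeplitz-like with displacement rank bounded by a small absolute constant. The Sylvester matrix of two polynomials has displacement rank at most $2$ with respect to the operator pair $(Z_e, Z_f)$, as noted in Subsection \ref{stlf}; likewise the matrix of multiplication modulo $q(T)$, being expressible as a product of a Toeplitz (multiplication) matrix and a reduction matrix of companion-like type, has displacement rank $O(1)$. Hence a short displacement generator $(G,H)$ of length $r = O(1)$ for $A = I_d - \lambda B$ (with $B$ the relevant operator matrix, suitably normalized) can be written down explicitly in parallel constant time from the coefficients of $p$ and $q$.

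The third step is simply to feed this into the machinery already developed: by Corollary \ref{cor:general} with $m = d$ and $r = O(1)$, the powers $B^0, B, \ldots, B^d$ are computed in parallel complexity $O_A(\log^2(d)\, r^2, d^2 r^2 / \log(d)) = O_A(\log^2(d), d^2/\log(d))$, and then Algorithm \ref{algnul} together with the cost analysis in Subsection \ref{stlinv2} produces the solution vector $\mathbf{y} = B^{-1}\mathbf{f}$ within the same bound, since every stage there was shown to be dominated by the cost of Algorithm \ref{algnwt}. Reading off $\mathbf{y}$ as the coefficient vector of $p^{-1}(T) \bmod q(T)$ completes the proof.

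The main obstacle — though a routine rather than a deep one — is pinning down the precise Toeplitz-like structure: one must choose the operator pair $(Z_e,Z_f)$ (with $e,f$ distinct scalars, e.g. $e=1$, $f=-1$, and $f$ possibly tied to the circular-shift convention) so that the displacement of the chosen $d\times d$ matrix genuinely has rank $O(1)$ and admits an explicitly computable generator, and to make sure the normalization $A = I_d - \lambda B$ is legitimate (i.e. that $B$ can be taken so that this formulation applies, possibly after scaling $q$ to be monic, which it already is). Once the generator is in hand, no further work is needed beyond citing Corollary \ref{cor:general} and the analysis of Algorithm \ref{algnul}.
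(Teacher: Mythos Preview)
Your proposal is correct and follows essentially the same approach as the paper: reduce the modular inverse to a nonsingular Toeplitz-like linear system via the Sylvester matrix of $p$ and $q$ (displacement rank at most $2$), then invoke Corollary~\ref{cor:general} and the cost analysis of Algorithm~\ref{algnul} in Subsection~\ref{stlinv2}. The only cosmetic difference is that the paper works directly with the full Sylvester matrix of size $m\le 2d-1$ rather than a $d\times d$ sub-block or the multiplication-by-$p$ matrix, but this does not affect the asymptotic bound.
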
 

\begin{proof} 
It follows from Corollary \ref{cor:general} and the previous subsection and from the fact that the Sylvester matrix of $p$ and $q$ has size $m\times m$ with $m\leq 2d-1$ and displacement rank $r\leq 2$.
\end{proof}

Besides modular inverses,  the computation of the polynomials in  Definition \ref{constr3} is  dominated by the computation of the adjoint of the polynomial matrix $J_\bfF(\bfv (T))$ modulo $q(T)$. We can assume that all polynomials in the polynomial arithmetics involved, as well as our input polynomials in $\bfF$, have degree at most $2d$. Then, according  to \cite[page 311]{BiniPan1994}, the parallel complexity of division with remainder using degrees at most $2d$ and $d$ polynomials is $O_A(\log(d)\log^*(d),d/\log^*(d))$. 
  Moreover,  using  \cite[page 319]{BiniPan1994}, we can compute the adjoint (and the inverse) of an   $n\times n$ scalar matrix in $O_A(\log^2(n),n^{\omega+1})$. Thus, the adjoint of $J_\bfF(\bfv (T))$ modulo $q(T)$ can be computed in 
$$O_A(\log^2(n)\log(d) \log^*(d), n^{\omega+1}d/\log^*(d)).$$ 
  
  Combining all the above we get the following proposition. Note that the most significant difference between its 
 complexity bounds 
and the ones in Proposition~\ref{compl1} 
is the extra $d$ factor in the required number of processors.

 \begin{proposition}
Assume that  we are given $\bfF$, $u$, $q(T)$ and $\vv(T)$ satisfying Assumption \ref{assu3}. Assume further that the polynomials in $\bfF$ have degree at most~$2d$. Then we can compute the polynomials   $\bar{Q}(T), \bar{\bfV}(T)=(\bar{V}_1(T), \ldots, 
\bar{V}_n(T))$  of Definition \ref{constr3}
 with the cost  $$O_A( \log^2(n)\log^2(d)\log^*(d) , n^{\omega+1}d^2/\log^*(d)).$$ \end{proposition}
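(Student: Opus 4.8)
The plan is to walk through the formulas of Definition~\ref{constr3} one line at a time, bound the parallel cost of each arithmetic step, and observe that the whole computation is dominated by two tasks: inverting the polynomial matrix $J_\bfF(\vv(T))$ modulo $q(T)$, and computing the two scalar modular inverses $1/\det J_\bfF(\vv(T)) \bmod q(T)$ and $1/\Lambda(T) \bmod q(T)$. Everything else --- the compositions producing $\bfF(\vv(T))$ and the entries of $J_\bfF(\vv(T))$, the $T$-derivatives of $\bfr(T)$ and $q(T)$, the matrix--vector products yielding $\bfw(T)$ and $\bfU(T)$, the linear forms $\Delta(T)$ and $\Lambda(T)$, and the final scalings in (\ref{bVV}) and (\ref{bQ}) --- I would show to be subdominant.

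First I would handle the evaluation step. Reducing modulo $q(T)$ after each arithmetic operation keeps every intermediate univariate polynomial of degree at most $2d$, and by \cite[page~311]{BiniPan1994} both a multiplication of two such polynomials and a division with remainder by $q(T)$ cost $O_A(\log(d)\log^*(d),d/\log^*(d))$; hence $\bfF(\vv(T))\bmod q(T)$, the reduced form $\bfr(T)$ from (\ref{r}), its $T$-derivative in (\ref{U}) (free up to index shifts), and all $n^2$ entries $\frac{\partial F_i}{\partial x_j}(\vv(T))\bmod q(T)$ of $J_\bfF(\vv(T))$ are obtained within these time bounds using $\mathrm{poly}(n)\cdot d/\log^*(d)$ processors. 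The one delicate point here is that a degree-$2d$ polynomial in $n$ variables need not have polynomially many monomials; so I would assume (as holds in our quadratic example) that the $F_i$ are given either with polynomially many terms or as straight-line programs, in which case evaluating the program over $R=\K[T]/\langle q(T)\rangle$ costs its length times the cost of one modular operation and stays within the claimed bound.

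Next comes the linear algebra over $R$. By \cite[page~319]{BiniPan1994} the adjoint and determinant of an $n\times n$ matrix over a commutative ring are computable with $O_A(\log^2(n),n^{\omega+1})$ ring operations; charging $O_A(\log(d)\log^*(d),d/\log^*(d))$ per ring operation gives the adjoint and determinant of $J_\bfF(\vv(T))$ modulo $q(T)$ in $O_A(\log^2(n)\log(d)\log^*(d),n^{\omega+1}d/\log^*(d))$ --- exactly the bound recorded just before the statement. Corollary~\ref{cor:inverse} then produces $1/\det J_\bfF(\vv(T))\bmod q(T)$ in $O_A(\log^2(d),d^2/\log(d))$, and scaling the $n^2$ adjoint entries by it is subdominant; one matrix--vector product modulo $q(T)$ gives $\bfw(T)$ and hence $\Delta(T)$, the derivative identity (\ref{kernel}) from the proof of Proposition~\ref{mainprop} gives $\bfU(T)$ by one more such product, then $\Lambda(T)=\sum_i\lambda_i U_i(T)$ and, by a second application of Corollary~\ref{cor:inverse}, its modular inverse; the formulas (\ref{bVV}) and (\ref{bQ}) for $\bar\bfV(T)$ and $\bar Q(T)$ close out with $O(n)$ further concurrent modular multiplications and subtractions, which are again subdominant.

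Finally I would add the parallel times and take the maximum of the processor counts over all stages. The dominant contributions are $\log^2(d)$ and $\log^2(n)\log(d)\log^*(d)$ in time and $d^2/\log(d)$ and $n^{\omega+1}d/\log^*(d)$ in processors; the crude bounds $\log^2(d)+\log^2(n)\log(d)\log^*(d)\le\log^2(n)\log^2(d)\log^*(d)$ and $\max\{d^2/\log(d),\,n^{\omega+1}d/\log^*(d)\}\le n^{\omega+1}d^2/\log^*(d)$, valid for $n,d\ge2$, yield the stated cost $O_A(\log^2(n)\log^2(d)\log^*(d),\,n^{\omega+1}d^2/\log^*(d))$ (the comparison with Proposition~\ref{compl1} then just amounts to noting the extra factor of $d$ in the processor count). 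I expect the main obstacle to be not the linear algebra but the composition step: making the evaluation of $\bfF$ and its partials genuinely fit a bound phrased only in terms of $n$ and $d$ forces the auxiliary hypothesis on the representation of $\bfF$ mentioned above, and once that is fixed the remainder is bookkeeping of the already established bounds from Corollary~\ref{cor:inverse}, \cite[page~311]{BiniPan1994}, and \cite[page~319]{BiniPan1994}.
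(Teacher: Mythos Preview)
Your proposal is correct and follows essentially the same route as the paper: the paper's argument (given in the paragraph immediately preceding the statement) identifies the two dominant costs as the adjoint of $J_\bfF(\vv(T))$ modulo $q(T)$, bounded via \cite[page~319]{BiniPan1994} composed with the per-operation cost from \cite[page~311]{BiniPan1994}, and the scalar modular inverses via Corollary~\ref{cor:inverse}, then combines them exactly as you do. Your write-up is in fact more careful than the paper's, since you flag the evaluation of $\bfF(\vv(T))$ and its partials as a potential obstacle --- a point the paper silently absorbs into ``dominated by the adjoint'' without discussing the representation of $\bfF$; your auxiliary hypothesis on a sparse or straight-line input is the natural way to close that gap.
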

 

\subsubsection{\bf Simplified computation of cofactors from roots.}\label{stlinv3}
  	

Given two coprime univariate polynomials $u=u(x)$ of a degree $d$ and $v=v(x)$ of a degree $m$
we seek their cofactors $s=s(x)$ of a degree at most $m-1$ and $t=t(x)$ of a degree at most $d-1$
such that $su+tv=1$. This task amounts to the solution of a Sylvester linear system 
of equations, which we can compute by applying the algorithms of the previous subsections.

Next, we consider the case where we are given $d$ distinct roots $x_1,\dots,x_d$ of the polynomial $u$.
(We can proceed similarly where we are given $n$ distinct roots $y_1,\dots,y_m$ of the polynomial $v$.) 
Then we can devise more
efficient algorithms by applying the evaluation/interpolation techniques of \cite{Toom63} as follows.


\begin{algorithm}\label{algCofactor} {\rm Computation of cofactors.}


\begin{description}


\item[{\sc Input:}]  two coprime univariate polynomials $u=u(x)$ of  degree $d$ and $v=v(x)$ of  degree $m\leq d$
and $d$ distinct roots $y_1,\dots,y_d$ of the polynomial $u$. 


\item[{\sc Output:}] two  cofactors, $s=s(x)$ of a degree at most $m-1$ and $t=t(x)$ of a degree at most $d-1$
such that $su+tv=1$.




\item[{\sc Computations:}] $~$


\begin{enumerate}
  \item 
  Compute the values $t(y_i)=1/v(y_i)$ for $i=1,\dots, d$.
\item 
Interpolate the polynomial $t(x)$.
\item 
Apply FFT to evaluate the polynomials $1-tv$ and $u$ and their ratio 
$(1-tv)/u$
at the $2^k$th roots of unity
for $k=\lceil\log_2(d+m)\rceil$. 
\item 
Apply the inverse FFT to interpolate the polynomial 
$s=(1-tv)/u$.
\end{enumerate}


\end{description}


\end{algorithm}
   
At Stages 1  and 2 we apply the efficient known algorithms (see 
Sections 3.1, 3.3, and 3.13 of \cite{Pan2001}) for the solution of both problems
of multipoint evaluation and interpolation. These algorithms  
essentially amount to 
$O(\log (d))$ concurrent  multiplications and divisions of univariate polynomials of degree at most $d$.
The overall cost of performing these operations 
is substantially smaller than the cost of performing the algorithms of the previous subsections for 
the Sylvester 
Toeplitz-like linear systems of equations,
and surely so is the cost 
of the application of FFT and inverse FFT at Stages 3 and 4 as well. Summarizing we perform the computations of the algorithm
in $O(\log^2(d)\log^*(d),d/\log^*(d))$.
 





\def\cprime{$'$}

\end{document}